\documentclass{article}
\usepackage{graphicx, mathtools, amsmath, amsthm, amssymb, bbm, xcolor, hyperref, todonotes} % Packages

% Delimiters
\DeclarePairedDelimiter{\bigabs}{\bigg|}{\bigg|}
\DeclarePairedDelimiter{\abs}{\lvert}{\rvert}
\DeclarePairedDelimiter{\paren}{\lparen}{\rparen}
\DeclarePairedDelimiter{\inprod}{\langle}{\rangle}
\DeclarePairedDelimiter{\biginprod}{\bigg\langle}{\bigg\rangle}
\DeclarePairedDelimiter{\norm}{\lVert}{\rVert}

\newcommand{\hypgeo}[2]{{\vphantom{F}}_{#1}\kern-\scriptspace F_{#2}}
%Theorems
\numberwithin{equation}{section}
\numberwithin{equation}{section}
\newtheorem{theorem}{Theorem}
\newtheorem{lemma}{Lemma}

\newtheorem{prop}{Proposition}

\newtheorem*{remark}{Remark}

\newcommand{\N}{\mathbb N}
\newcommand{\Z}{\mathbb Z}
\newcommand{\R}{\mathbb R}

\newcommand{\Bk}{\color{black}}

\title{\bf The magnetic Laplacian on the Disc\\ for strong magnetic fields}
\author{Ayman Kachmar$^*$ and Germ\'an Miranda$^\dag$ \\
\footnotesize{\it $^*$The Chinese University of Hong Kong, Shenzhen, Guangdong,
518172, P.R. China}\\
\footnotesize{akachmar@cuhk.edu.cn}\\
\footnotesize{\it $^\dag$Department of Mathematics, Lund University, Box 118, 22100, Sweden}\\
\footnotesize{german.miranda@math.lth.se}}

\date{\today}
\begin{document}
\maketitle
\begin{abstract}
    The magnetic Laplacian on a planar domain under a strong constant magnetic field has eigenvalues close to the Landau levels. We study the case when the domain is a disc and the spectrum consists of branches of eigenvalues of one dimensional operators. Under Neumann boundary condition and strong magnetic field, we   derive asymptotics of the eigenvalues with  accurate estimates of  exponentially small remainders. Our approach is purely  variational and applies to the Dirichlet boundary condition as well, which allows us to recover   recent results by Baur and Weidl.
\end{abstract}

\section{Introduction}\label{sec:int}

\subsection{Motivation}
In their recent paper \cite{baurweidl}, Baur and Weidl studied the  Laplacian on the disc with homogeneous magnetic field and Dirichlet boundary condition. In the strong magnetic field limit, they proved that the branches of the eigenvalues behave linearly with respect to the magnetic field strength with  exponentially small remainder. Their proof relies on the analysis of the corresponding ordinary differential equations with respect to the radial variable, and a key ingredient in their analysis involves the properties of special functions.

In this paper, we address the same question but for the Neumann boundary condition, and we derive analogous   eigenvalue asymptotics. To leading order, the exponentially small remainders in our setting are up to sign change the same as those in \cite{baurweidl}. However, the approach via special functions as in \cite{baurweidl} seems challenging  for the Neumann boundary condition. We therefore propose a different variational approach based on the Temple inequality, which interestingly works for the Dirichlet and Robin boundary conditions  too.

\subsection{The magnetic Laplacian on the disc}
Let \(b>0\) be the strength of the magnetic field and consider the Dirichlet or Neumann realization of the magnetic Laplacian in the domain \(\Omega_R := D(0,R) = \{x\in \mathbb{R}^2 : \abs{x} <R\}\),
\begin{equation}
    H_b(R) = (-i\nabla +bA)^2, 
\end{equation}
where \(A(x_1, x_2) = \frac{1}{2}\paren{-x_2, x_1}\) is the vector magnetic potential generating the unit magnetic field $\mathrm{curl}\,A=1$. Due to the rotational symmetry of the problem it is helpful to consider polar coordinates
\[\begin{cases}
x_1 = r\cos{\theta} \\
x_2 = r\sin{\theta}
\end{cases}\]
where \(r\in [0, +\infty)\) and \(\theta\in [0, 2\pi )\). In these coordinates, the vector potential reads 
\begin{equation*}
    A(x_1, x_2) = \frac{r}2 (-\sin{\theta} , \cos{\theta}) = \frac{r}{2} \hat{e}_{\theta}, 
\end{equation*}
and the operator \(H_b(R) \) expresses as 
\begin{equation*}
   H_b(R) = -\partial^2_r - \frac{1}{r}\partial_r +\bigg (\frac{i}{r}\partial_{\theta} - \frac{br}{2}\bigg )^2.
\end{equation*}
In polar coordinates, the Dirichlet and Neumann boundary conditions are, respectively, $u|_{r=1}=0$ and $\partial_ru|_{r=1}=0$.\medskip

The operator \(H_b(R)\) is unitarily equivalent to \(R^{-2}H_{bR^2}(1)\) via  the \(L^2\)-unitary transform
   \begin{equation*}
       U_R u(r, \theta) := u(r/R, \theta),
   \end{equation*}
so there is no loss of generality to restrict the study to the unit disc. Hereafter, we will deal with \(H_b(1)\), and we will write \(H_b\) and \(\Omega\) instead of \(H_b(1)\) and \(\Omega_1\) to lighten the notation. 

We denote by $H_b^D$ and $H^N_b$ the Dirichlet and Neumann realizations of $H_b$, with domains 
\[\begin{split}
    D(H_b^D)=&\{u\in L^2(\Omega) \colon H_bu\in L^2(\Omega),\quad u|_{\partial\Omega}=0\},\\
    D(H_b^N)=&\{u\in L^2(\Omega) \colon H_bu\in L^2(\Omega),\quad \partial_r u|_{\partial\Omega}=0\}.\\
\end{split}\]
\subsection{Fourier decomposition}
We can do a Fourier decomposition of \(L^2(\Omega)\) as follows
\begin{equation*}
    L^2(\Omega)\cong L^2((0, 1), rdr) \otimes L^2 (\mathbb{S}^1, d\theta) \cong \bigoplus_{m\in \mathbb{Z}}\biggl( L^2((0, 1), rdr) \otimes  \biggl[\frac{e^{-im\theta}}{\sqrt{2\pi}}\biggr]\biggr),
\end{equation*}
by decomposing \(u\in L^2(\Omega)\) as
\begin{equation*}
    u(r, \theta) = \sum_{m\in \mathbb{Z}} u_m(r) \frac{e^{-im\theta}}{\sqrt{2\pi}},
\end{equation*}
where 
\begin{equation*}
u_m(r) =   \frac{1}{\sqrt{2\pi}}\int_0^{2\pi} u(r, \theta) e^{im\theta}\, d\theta, 
\end{equation*}
and \(\sum_{m\in \mathbb{Z}} \norm{u_m}^2 < +\infty\). This decomposition allows us to decompose our original operators as

\begin{equation*}
    H_b^D \ = \bigoplus_{m\in \mathbb{Z}}\biggl( H_{m,b}^D \otimes I_m\biggr) \text{ and } H_b^N \ = \bigoplus_{m\in \mathbb{Z}}\biggl( H_{m,b}^N \otimes I_m\biggr)
\end{equation*}
where  \(I_m\) is the identity operator on \(e^{-im\theta}/\sqrt{2\pi}\). Moreover, \(H_{m,b}^D\) and \(H_{m,b}^N\) are the Dirichlet, respectively Neumann, realization acting on \(L^2((0, 1), r\, dr) \)  of the differential operator
\begin{equation}\label{Hm,b}
   H_{m,b} = -\frac{d^2}{dr^2} - \frac{1}{r}\frac{d}{dr} +\bigg (\frac{m}{r} - \frac{br}{2}\bigg )^2 .
\end{equation}
More precisely,  for $m\not=0$, the domains of $H^D_{m,b}$ and $H^N_{m,b}$ are 
\begin{align*}
  \mathcal{D}(H_{m,b}^D)=&\{u\colon u,u/r,u',H_{m,b}u\in L^2((0, 1), r\, dr),~  u(1)=0\},\\
    \mathcal{D}(H_{m,b}^N) = &\{u\colon u,u/r,u',H_{m,b}u\in L^2((0, 1), r\, dr),~ u'(1)=0\}, 
\end{align*}
whereas\footnote{Using the Liouville transform $L^2(\R_+,rdr)\ni u(r)\mapsto f(r)=\sqrt{r}u(r)\in L^2(\R_+,dr)$ and \cite[Theorem 2.2]{KristenLoya}, we can show that if \(u\in\mathcal{D}(H_{0,b}^\#)\), $\#\in\{D,N\}$, then  $u'(0)=0$.} (see \cite[Remark 3.1]{Isoperimetric})
\begin{align*}
    \mathcal{D}(H_{0,b}^D)=&\{u\colon u,H_{0,b}u\in L^2((0, 1), r\, dr),~  u(1)=0 \text{ and } \lim_{r\rightarrow 0^+} \frac{u(r)}{\ln{r}} =0 \},\\
    \mathcal{D}(H_{0,b}^N)=&\{u\colon u,H_{0,b}u\in L^2((0, 1), r\, dr),~ u'(1)=0 \text{ and } \lim_{r\rightarrow 0^+} \frac{u(r)}{\ln{r}} =0\}.
\end{align*}

\subsection{Main result}
Since we are considering a bounded smooth domain in \(\mathbb{R}^2\), we know that both the Dirichlet realization \(H^D_{m,b}\) and the Neumann realization \(H_{m,b}^N\) of $H_{m,b}$ have discrete spectra. For $n\in\N$, let \(\lambda_n\paren{H^D_{m,b}}\) and   \(\lambda_n\paren{H^N_{m,b}}\) be the $n$-th eigenvalue of 
$H_{m,b}^D$ and $H^N_{m,b}$ respectively, counting multiplicity.
These are the branches of eigenvalues that constitute the spectra of the Dirichlet and Neumann realizations of the magnetic Laplacian,
\[\mathrm{sp}(H^D_b)=\bigcup_{m\in\Z,n\in\N}\{\lambda_n\paren{H^D_{m,b}}\}\quad\mbox{and}\quad \mathrm{sp}(H^N_b)=\bigcup_{m\in\Z,n\in\N}\{\lambda_n\paren{H^N_{m,b}}\}.\]
Let us recall the result of \cite{baurweidl} concerning the eigenvalues of $H^D_b$, which asserts that for  $n$ and $m$ fixed, we have as $b\to+\infty$,
\begin{multline}\label{eq:BW}
    \lambda_n\paren{H^D_{m,b}} =
    \paren{2n-1 +\abs{m} -m} b  \\+e^{-\frac{b}{2}} \bigg( \frac{ b^{2n+m}  }{  \paren{n-1}!\paren{m+n-1}!2^{2(n-1)+m}} + \mathcal{O}\paren{b^{2n+m}}\biggr).
\end{multline}
This behaviour is displayed in Figure \ref{Figure 2}. Notice that, when $n=1$, the asymptotics in \eqref{eq:BW} was obtained earlier by Helffer and Sundqvist \cite[Theorem~5.1]{HelfferSund} for any $m\in\Z$, and by \cite{erdos} for $m=0$.\medskip

 For the Neumann realization, numerical computations (see \cite{SaintJames1965EtudeDC} or Figure \ref{Figure 1} below) suggest that the branches of the eigenvalues behave linearly with respect to the magnetic field strength but with a negative exponentially small remainder. Our main result concerns the eigenvalues of the Neumann realization $H^N_b$, which we state below.
\begin{theorem}\label{Main theorem}
    Given $n\in\N$ and $m\in\Z$, if \(b\rightarrow +\infty\), it holds that
    \begin{multline}\label{eq:main}
        \lambda_n\paren{H^N_{m,b}} =  \paren{2n-1 +\abs{m} -m} b\\  - e^{-\frac{b}{2}} \bigg( \frac{ b^{2n+m}  }{  \paren{n-1}!\paren{m+n-1}!2^{2(n-1)+m}} + \mathcal{O}\paren{b^{2n+m}}\biggr). 
    \end{multline}
\end{theorem}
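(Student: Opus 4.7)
The proof strategy is to combine a variational upper bound with a Temple-inequality lower bound; the crux of the argument is the construction of a suitable quasi-mode.

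For the upper bound, the natural starting point is the Landau radial eigenfunction $\phi_{j,m}(r) := r^{|m|} L_{j-1}^{|m|}(br^2/2)\, e^{-br^2/4}$, an exact eigenfunction of $H_{m,b}$ on $(0, +\infty)$ with eigenvalue $E_{j,m} = (2j-1+|m|-m)b$. Used directly as a form-domain test function, $\phi_{n,m}$ gives $q_{m,b}[\phi_{n,m}]/\|\phi_{n,m}\|^2 = E_{n,m} + \phi_{n,m}(1)\phi_{n,m}'(1)/\|\phi_{n,m}\|^2$, which has the correct exponential scale $e^{-b/2}$ but only half the desired coefficient $C_{n,m} := b^{2n+m}/\bigl((n-1)!(m+n-1)!\,2^{2(n-1)+m}\bigr)$---the bare Landau function misses a boundary-layer contribution where the true Neumann eigenfunction is asymptotically doubled. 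To recover the missing factor of two, I would set $\psi_j := \phi_{j,m} + c_j \chi_{j,m}$, where $\chi_{j,m}$ is the exponentially growing Kummer-type solution obtained by differentiating $M(\alpha, |m|+1, br^2/2)\,e^{-br^2/4}$ in $\alpha$ at the relevant Landau level (so that $\chi_{j,m}$ is concentrated in a boundary layer of width $\sim 1/b$ near $r=1$), and $c_j = O(e^{-b/2})$ is chosen so that $\psi_j'(1) = 0$. The pivotal identity, obtained by differentiating the Kummer eigenvalue equation in the spectral parameter, reads $(H_{m,b} - E_{j,m})\chi_{j,m} = -2b\,\phi_{j,m}$ modulo exponentially negligible terms. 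Applying the min--max principle to $V_n := \mathrm{span}\{\psi_1, \ldots, \psi_n\}$, the Gram and stiffness matrices are nearly diagonal (Landau functions are $L^2(\R_+, r\, dr)$-orthogonal, and the disc integrals differ from the $\R_+$ integrals only by exponentially small tails), so the max reduces to the diagonal Rayleigh quotient; Laplace-type asymptotics on the boundary-layer integrals then yield $q_{m,b}[\psi_n]/\|\psi_n\|^2 = E_{n,m} - C_{n,m}\,e^{-b/2} + (\text{lower order})$, providing the upper bound.

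For the lower bound, I would apply Temple's inequality to $\psi_n \in \mathcal{D}(H^N_{m,b})$ (its projection version, for $n \geq 2$, onto the orthogonal complement of the first $n-1$ true eigenfunctions):
\[
    \lambda_n(H^N_{m,b}) \;\geq\; \mu_n - \frac{\|(H^N_{m,b} - \mu_n)\psi_n\|^2}{(\lambda_{n+1}(H^N_{m,b}) - \mu_n)\|\psi_n\|^2},
\]
where $\mu_n := q_{m,b}[\psi_n]/\|\psi_n\|^2$. The gap $\lambda_{n+1} - \mu_n$ is of order $b$ (the Landau spacing is $2b$), while the identity $(H_{m,b} - E_{n,m})\chi_{n,m} = -2b\,\phi_{n,m}$ combined with $c_n = O(e^{-b/2})$ yields $\|(H^N_{m,b} - \mu_n)\psi_n\|^2 = O(b^N e^{-2b})$ for some finite polynomial exponent $N$. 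The Temple correction is therefore doubly exponentially small, completely negligible compared with the $e^{-b/2}$ main term, and combining with the upper bound gives the theorem.

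The main obstacle is the sharp Laplace-method asymptotic analysis of the boundary-layer integrals---especially $\langle \phi_{n,m}, \chi_{n,m}\rangle$ and $\|\chi_{n,m}\|^2$ over $\{1 - O(1/b) \leq r \leq 1\}$---which pin down the constant $C_{n,m}$ exactly. For $n \geq 2$, a Gram--Schmidt orthogonalization of the quasi-modes, together with careful control on the difference between the true eigenfunctions and the quasi-modes, is also required so that the projection version of Temple's inequality introduces only exponentially small errors.
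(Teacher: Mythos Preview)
Your overall architecture---quasi-mode plus Rayleigh--Ritz for the upper bound, Temple for the lower bound---matches the paper's. The main structural difference is the choice of correction term. The paper corrects the Landau function $\phi_{n,m}=r^mL_{n-1}^m(br^2/2)\,e^{-br^2/4}$ by the elementary companion $r^mL_{n-1}^m(br^2/2)\,e^{+br^2/4}$ (one simply flips the sign in the Gaussian), choosing the relative coefficient so that the Neumann condition holds; every integral is then a finite sum of incomplete Gamma-type terms and the constant $C_{n,m}$ drops out of an explicit polynomial computation. Your correction $\chi_{n,m}=\partial_\alpha\bigl(r^mM(\alpha,m+1,br^2/2)e^{-br^2/4}\bigr)\big|_{\alpha=-(n-1)}$ is more canonical---it yields the exact identity $(H_{m,b}-E_n)\psi_n=-2bc_n\cdot(\mathrm{const})\,\phi_{n,m}$, hence a relative residual of size $O(e^{-b})$ rather than the paper's $O(e^{-b/2})$---but extracting $C_{n,m}$ then requires sharp asymptotics of $\partial_\alpha M(\alpha,m+1,b/2)$ and its $r$-derivative at $\alpha=-(n-1)$, which is less elementary. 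Two small slips: the residual $\|(H-\mu_n)\psi_n\|^2$ is $O(b^Ne^{-b})$, not $O(b^Ne^{-2b})$ (still negligible against $e^{-b/2}$, so the conclusion survives), and the layer in which $\chi$ concentrates has width $\sim b^{-1/2}$, not $b^{-1}$.

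The genuine gap is in the Temple step. You take for granted that $\lambda_{n+1}(H^N_{m,b})-\mu_n$ is of order $b$, i.e., that the Neumann eigenvalues are \emph{a priori} close to the Landau levels. This is not automatic: on the bounded interval with Neumann condition there is no obvious monotonicity forcing eigenvalues above $(2n-1)b$, and without such a separation the Temple denominator is uncontrolled. The paper proves this separately (Proposition~1, giving $\lambda_n(H^N_{m,b})\ge (2n-1)b-C$) via an IMS localization and comparison with the full-plane Landau operator, and then feeds it into the Kato form of Temple's inequality, which needs only a numerical lower bound on $\lambda_{n+1}$ rather than any knowledge of the true eigenfunctions. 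Your ``projection version'' instead requires controlling how close the first $n-1$ true eigenfunctions are to your quasi-modes; this can be made to work, but it is circular without the rough eigenvalue localization already in hand. You should supply an argument of Proposition~1 type before invoking Temple.
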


\begin{figure}
\includegraphics[width=12cm]{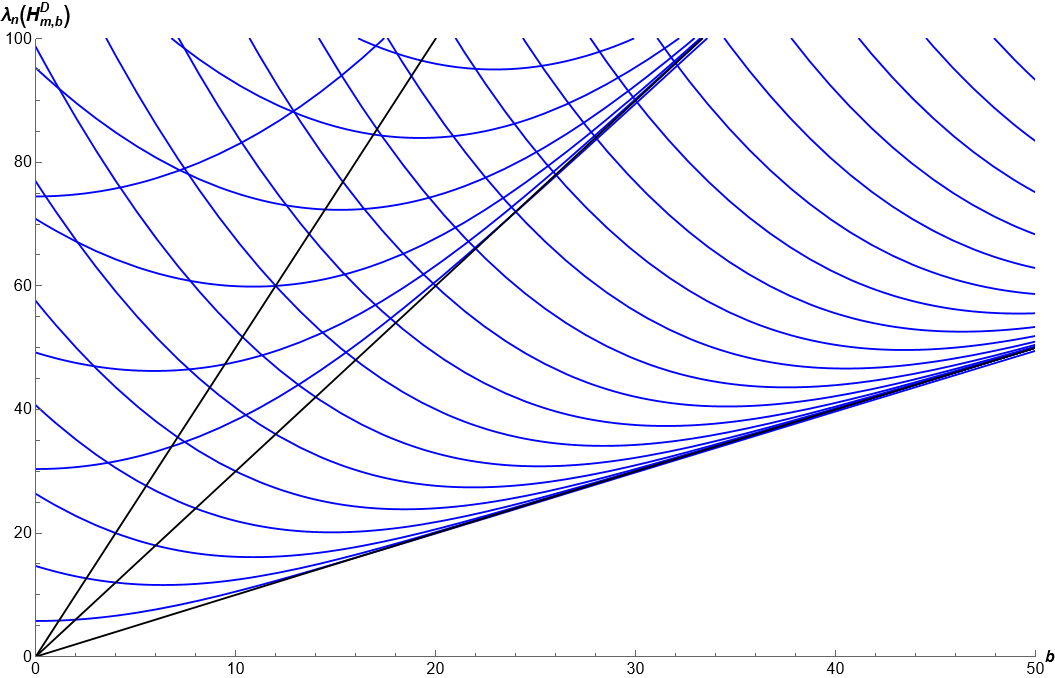}
\centering
\caption{Numerical computations of \( \lambda_n\paren{H^D_{m,b}} \) (blue) for \(0\leq m \leq  25\) and \(1\leq n \leq 3\) together with the curves \(b,3b\) and \(5b\) in black.}
\label{Figure 2}
\end{figure}
\begin{figure}
\includegraphics[width=12cm]{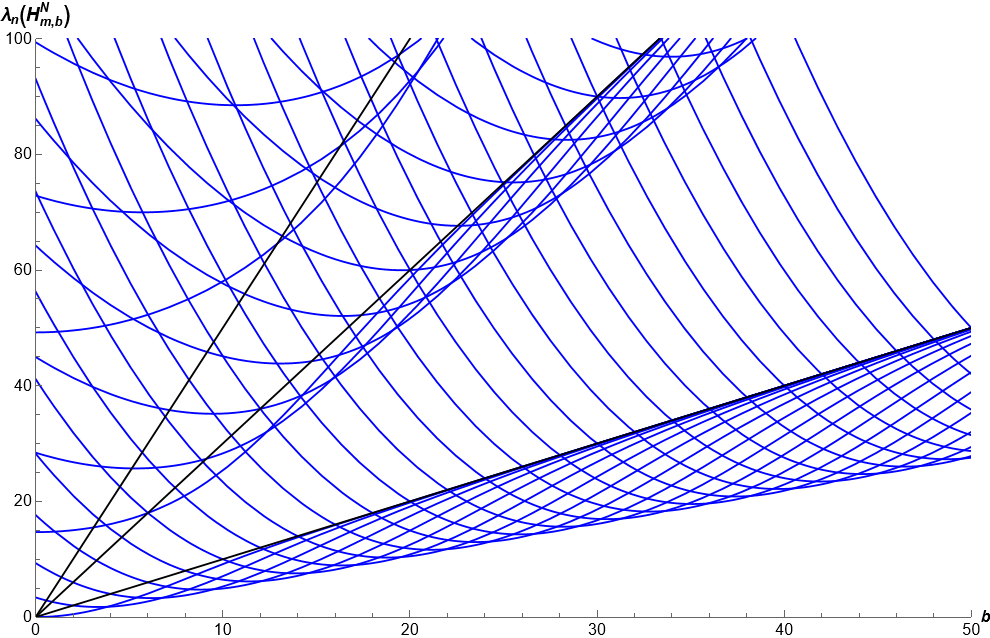}
\centering
\caption{Numerical computations of \( \lambda_n\paren{H^N_{m,b}} \) (blue) for \(0\leq m \leq  25\) and \(1\leq n \leq 3\) together with the curves \(b,3b\) and \(5b\) in black.}
\label{Figure 1}
\end{figure}
\begin{remark}\rm 
Our proof of Theorem~\ref{Main theorem} relies on the construction of a trial state and allows us to recover \eqref{eq:BW}. The proof of \eqref{eq:BW} in  \cite{baurweidl} relies on  special functions, and if we try to adapt it  for the Neumann case, we end up with finding \(\lambda\) such that
    \begin{equation}\label{Neumann case with special function}
    2bM'\biggl(\frac{1}{2}\paren{ 1 - \frac{\lambda}{b}},m + 1, \frac{b}{2} \biggr) -(b-2m) M\biggl(\frac{1}{2}\paren{ 1 - \frac{\lambda}{b}},m+ 1, \frac{b}{2} \biggr) =0,
    \end{equation}
    where \(M(a,c,z)\) is the Kummer's function\footnote{See (13.2.1) at \href{http://dlmf.nist.gov/13.2.E1}{\tt http://dlmf.nist.gov/13.2.E1}.} with \(a=\frac{1}{2}\paren{ 1 - \frac{\lambda}{b}}\) and \(c= m+ 1 \). Solving this equation is way harder than for the Dirichlet case, where one needs to find roots of a given Kummer's function \(M(a,c,z)\),  which is done in \cite{baurweidl}. 
\end{remark}

\subsection{The trial state and scheme of the proof}

We will divide the proof of Theorem \ref{Main theorem} into finding matching upper and  lower bounds. It suffices to treat the case where  \(m\geq 0\),   since for $m<0$ we have 
\begin{equation}
    H_{m,b} = H_{-m,b} + 2 \abs{m} b,
\end{equation}
which is the consequence of the identity \((\frac{m}{r} - \frac{br}{2} )^2 = (\frac{-m}{r} -\frac{br}{2})^2 - 2 \abs{m} b\).\medskip

Both the upper and lower bounds  will be found using the following trial state
\begin{equation}\label{Trial state}
    u_{m,n}(r) := r^m L^m_{n-1}\biggl(\frac{br^2}{2}\biggr)\biggl(C_1 e^{\frac{b}{4}\paren{1-r^2}}+C_2 e^{-\frac{b}{4}\paren{1-r^2}}\biggr),
\end{equation}
where \(C_1, C_2\) are constants to determine such that the trial state is in the domain of the operator and \(L^m_{n-1}\) are the associated Laguerre polynomials, 
\begin{equation}\label{Associated Laguerre polynomials}
    L^m_{n-1}(s) = \frac{e^s}{s^m n-1!} \frac{d^{n-1}}{ds^{n-1}}\paren{e^{-s} s^{n+m-1}} =\sum_{l=0}^{n-1}\frac{\paren{-1}^l}{l!}\binom{n-1+m}{n-1-l}s^l,
\end{equation}
with degree $n-1$. 

For $n=1$, the trial state was introduced in \cite{HelfferSund} with $C_1=-C_2=1$.
We refer to the Appendix \ref{AppendixA} for reasoning on why we have chosen the Laguerre polynomials. The first term in $u_{m,n}$, with coefficient $C_1$, is actually a solution of the differential equation $H_{m,b} f=(2n-1)f$, but it ceases to satisfy the Dirichlet or Neumann conditions, and that is behind adding the term with coefficient $C_2$. This line of thought traces back to Bolley and Helffer in the context of semi-classical asymptotics \cite{BolleyHelffer}.  

In order to use the trial state \(u_{m,n}\) we need to impose adequate boundary conditions.
\begin{enumerate}
    \item \textbf{Dirichlet boundary conditions:} We require that \(u(1)=0\), which gives \(L^m_{n-1}\paren{\frac{b}{2}}\paren{C_1+C_2} = 0\), and consequently
\begin{equation}\label{Dirichlet boundary condition}
    C_1=-C_2,
\end{equation}
We can then take $C_1=1$ and $C_2=-1$, as in \cite{HelfferSund}.
\item \textbf{Neumann boundary conditions:} Now we need that \(u'(1)=0\), which gives
\begin{equation}\label{Neumann boundary conditions}
    C_2 = \frac{\paren{\frac{b}{2}-m}L^m_{n-1}(\frac{b}{2}) - b (L^m_{n-1})'(\frac{b}{2})}{\paren{\frac{b}{2}+m}L^m_{n-1}(\frac{b}{2}) +  b(L^m_{n-1})'(\frac{b}{2})}C_1. 
\end{equation}
\end{enumerate}
\begin{remark}\rm 
     In the Neumann case, an important observation is that for large enough \(b\), \(C_1\) and \(C_2\) have the same sign, which will be crucial in the sign we obtain in Theorem~\ref{Main theorem}. 
    
    It is also important to notice that $C_1$ and $C_2$ depend on $b$, but  to leading order, the relation between \(C_1\) and \(C_2\) is independent  of \(b\). More precisely, as $b\to+\infty$, \eqref{Neumann boundary conditions} yields
    \[C_2 =\bigl(1+ \mathcal O(b^{-1})\Bk)\bigr)C_1,\]
    so we can take $C_1=1$ and $C_2=C_2(b)$ with $C_2(b)=1+\mathcal O(b^{-1})$.
\end{remark}

\begin{remark}\rm 
    One can also consider the Robin boundary condition $u'(1)=\gamma u(1)$, with $\gamma\in\R$; the result will be analogous to the Neumann case for fixed Robin parameter \(\gamma\), with the sub-leading term  independent of $\gamma$. In particular, the boundary condition gives
    \begin{equation}\label{Robin boundary conditions}
    C_2 = \frac{\paren{\frac{b}{2}-m+\gamma }L^m_{n-1}(\frac{b}{2}) - b (L^m_{n-1})'(\frac{b}{2})}{\paren{\frac{b}{2}+m-\gamma}L^m_{n-1}(\frac{b}{2}) +  b(L^m_{n-1})'(\frac{b}{2})}C_1, 
\end{equation}
which, as in the Neumann case, reads as \( C_2 =\bigl(1+\mathcal O(b^{-1})\bigr)C_1\) as \(b\rightarrow +\infty\).
\end{remark}

\subsection{Concluding comments}

The study of the Neumann realization of the magnetic Laplacian traces back to  works of Saint-James and de\,Gennes \cite{SaintJames, SaintJames1965EtudeDC} in the context of superconductivity. For the disc, their plot of the eigenvalue branches indicate the linear dependence on the magnetic field to leading order, consistently with the asymptotics in \eqref{eq:main}.

In contrast to the Dirichlet realization, the lowest eigenvalue for the Neumann realization $H_b^N$ does not correspond to the branch $m=0$. The study of the lowest eigenvalue for the Neumann magnetic Laplacian is then significantly harder and was the subject of a vast literature, starting with \cite{BaumanPhillipsTang} for the disc and \cite{HelfferMorame} for general domains. More recently, interest in this question emerges in the context of geometric inequalities \cite{CLPS, KLS}.

In Theorem~\ref{Main theorem}, we assumed that $m$ is fixed. Increasing $m$ in a manner dependent on $b$ can harm the result and is likely to  change the  
expression of the exponential sub-leading term as in \cite[Prop.~3.5]{FK} (and even the leading term as one observes in \cite{BaumanPhillipsTang}). That there are so many eigenvalues below each Landau level is another interesting consequence of Theorem~\ref{Main theorem}, but this was known for the lowest Landau level and  general domains \cite{FFGKS}.\medskip

The proof of Theorem~\ref{Main theorem} occupies the rest of this paper, where in Section~\ref{Section 2} we derive an eigenvalue upper bound via the min-max principle, and in Section~\ref{Section 3}, we derive the matching lower bound via the Temple inequality. In the appendix, we collect various computations and a standard comparison argument that were used in the earlier sections.

\section{Upper bound}\label{Section 2}

We denote by $\norm{\cdot}$ and $\inprod{\cdot,\cdot}$ the norm and inner product in $L^2((0,1),rdr)$. 
We will  use the Raylegh--Ritz variational formula \cite[Section 4]{Plum} which gives
\begin{equation}\label{Raylegh-Ritz Dirichlet}
     \lambda_n\paren{H^D_{m,b}}  = \min_{\substack{U\subset \mathcal{D}\paren{H^D_{m,b}} \\ \text{dim}U = n}} \max_{u\in U\setminus\{0\}} \frac{\inprod{H^D_{m,b}u,u}}{\inprod{u,u}},
\end{equation}
and 
\begin{equation}\label{Raylegh-Ritz Neumann}
     \lambda_n\paren{H^N_{m,b}}  = \min_{\substack{U\subset \mathcal{D}\paren{H^N_{m,b}} \\ \text{dim}U = n}} \max_{u\in U\setminus\{0\}} \frac{\inprod{H^N_{m,b}u,u}}{\inprod{u,u}}.
\end{equation}
\subsection{Linear Independence}
We need to check that the trial states in \eqref{Trial state} constitute a linearly independent set. We will actually prove that they are almost orthogonal, up to small errors. We start by inspecting the norms. 
\begin{lemma}\label{Lemma norm umn}
   Given integers $m\geq 0$ and $n\geq 1$,  let \(u_{m,n}\) be defined as in \eqref{Trial state}.  Then, as \(b\rightarrow +\infty\),
    \begin{equation}\label{Norm umn}
         \norm{u_{m,n}}^2 =\frac{C_1^2 \Gamma(m+n)2^m}{b^{m+1} \paren{n-1}!}e^{\frac{b}{2}} \ +\mathcal{O}\paren{b^{2n-2}},
    \end{equation}
    where $\Gamma$ is the Gamma function. 
\end{lemma}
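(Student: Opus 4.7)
The plan is to compute $\norm{u_{m,n}}^2=\int_0^1 |u_{m,n}(r)|^2\,r\,dr$ by the substitution $s=br^2/2$ (so that $r\,dr=ds/b$ and $r^{2m}=(2s/b)^m$), which transforms the integral into
\[
\norm{u_{m,n}}^2=\frac{2^m}{b^{m+1}}\int_0^{b/2} s^m\bigl[L^m_{n-1}(s)\bigr]^2\Bigl(C_1 e^{(b/2-s)/2}+C_2 e^{-(b/2-s)/2}\Bigr)^2 ds.
\]
Expanding the square produces three pieces,
\[
C_1^2\,e^{b/2}\,I_1+2C_1C_2\,I_2+C_2^2\,e^{-b/2}\,I_3,
\]
where $I_1=\int_0^{b/2}s^m[L^m_{n-1}(s)]^2 e^{-s}ds$, $I_2=\int_0^{b/2}s^m[L^m_{n-1}(s)]^2ds$, and $I_3=\int_0^{b/2}s^m[L^m_{n-1}(s)]^2 e^{s}ds$. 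The strategy is to identify $I_1$ as the source of the leading term and to bound $I_2$ and $I_3$ as polynomial corrections.

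For $I_1$, I invoke the standard orthogonality of the associated Laguerre polynomials,
\[
\int_0^{+\infty} s^m\bigl[L^m_{n-1}(s)\bigr]^2 e^{-s}ds=\frac{\Gamma(m+n)}{(n-1)!},
\]
and estimate the tail $\int_{b/2}^{+\infty} s^m[L^m_{n-1}(s)]^2 e^{-s}ds$: since $s^m[L^m_{n-1}(s)]^2$ is a polynomial of degree $m+2(n-1)$, this tail is bounded by a polynomial in $b$ times $e^{-b/2}$, which after multiplication by the prefactor $C_1^2\,2^m e^{b/2}/b^{m+1}$ contributes only $\mathcal{O}(b^{2n-3})$, hence is absorbed into the error.

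For $I_2$, since $s\mapsto s^m[L^m_{n-1}(s)]^2$ is a polynomial of degree $m+2n-2$, I simply get $I_2=\mathcal{O}(b^{m+2n-1})$, so the middle contribution is $\frac{2^{m+1}C_1C_2}{b^{m+1}}\cdot\mathcal{O}(b^{m+2n-1})=\mathcal{O}(b^{2n-2})$. For $I_3$, the brutal pointwise bound $e^{s}\le e^{b/2}$ on $[0,b/2]$ gives $e^{-b/2}I_3\le I_2=\mathcal{O}(b^{m+2n-1})$, yielding $\mathcal{O}(b^{2n-2})$ again after the prefactor. Summing the three contributions produces exactly \eqref{Norm umn}.

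The only mildly delicate step is keeping the error truly $\mathcal{O}(b^{2n-2})$ uniform in the choice of boundary condition; this amounts to observing that in all three cases (Dirichlet, Neumann, Robin) one has $C_2=\mathcal{O}(C_1)$ as $b\to+\infty$, so the products $C_1C_2$ and $C_2^2$ appearing in the subleading terms can be controlled in terms of $C_1^2$ (or a universal constant once $C_1=1$ is fixed). Otherwise the argument is just bookkeeping on three explicit one-dimensional integrals, with Laguerre orthogonality doing the real work.
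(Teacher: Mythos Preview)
Your proof is correct and follows essentially the same route as the paper: the same substitution $s=br^2/2$, the same three-term splitting, and the same use of Laguerre orthogonality for the dominant $C_1^2$ piece with tail bounds via incomplete-Gamma asymptotics. The only cosmetic difference is that for the $C_2^2$ piece you use the crude bound $e^{s}\le e^{b/2}$ to reduce to $I_2$, whereas the paper computes $\int_0^{b/2}s^k e^{s}\,ds$ explicitly; your shortcut gives $\mathcal{O}(b^{2n-2})$ rather than the paper's $\mathcal{O}(b^{2n-3})$ for that term, but since the $C_1C_2$ contribution is already $\mathcal{O}(b^{2n-2})$ this makes no difference to the statement.
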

\begin{proof}
    By \eqref{Trial state} we have
    \begin{align*}
        \norm{u_{m,n}}^2  = &\int_0^1 r^{2m+1} L^m_{n-1}\biggl(\frac{br^2}{2}\biggr)^2\biggl(C_1^2 e^{\frac{b}{2}\paren{1-r^2}} +C_2^2  e^{-\frac{b}{2}\paren{1-r^2}} +2C_1C_2\biggr) \, dr \\
                          = &\frac{2^{m}}{b^{m+1}}\int_0^{\frac{b}{2}}  s^m L^m_{n-1}\paren{s}^2 \biggl(C_1^2 e^{\frac{b}{2}-s} +C_2^2  e^{-\frac{b}{2}+s} +2C_1C_2\biggr)\, ds  ,
    \end{align*}
    where we have used the change of variables \(s= \frac{br^2}{2}\). We treat each term separately. 
    \begin{enumerate}
        \item \textbf{\(\mathbf{C_1^2}\) term: }For this term we can use the orthogonality of the associated Laguerre polynomials\footnote{See (18.3.1) in \href{http://dlmf.nist.gov/18.3.T1}{http://dlmf.nist.gov/18.3.T1}.} 
        \begin{equation}
            \int_0^{\infty} s^m  L^m_{n-1}\paren{s}^2  e^{-s} \, ds = \frac{\Gamma(m+n)}{n-1!},
        \end{equation}
         Thus, 
        \begin{multline*}
              C_1^2 \frac{2^{m}}{b^{m+1}} e^{\frac{b}{2}}\int_0^{\frac{b}{2}}  s^m L^m_{n-1}\paren{s}^2 e^{-s}\, ds = \\
              =C_1^2 \frac{2^{m}}{b^{m+1}}e^{\frac{b}{2}}\biggl( \frac{\Gamma(m+n)}{n-1!}  -  \int_{\frac{b}{2}}^{\infty}  s^m L^m_{n-1}\paren{s}^2 e^{-s}\, ds \biggr). 
        \end{multline*}
        Note that the second term is equal to
        \begin{multline*}
             \int_{\frac{b}{2}}^{\infty}  s^m L^m_{n-1}\paren{s}^2 e^{-s}\, ds \\
             = \sum_{k=0}^{n-1}\sum_{l=0}^{n-1}\frac{\paren{-1}^{l+k}}{k!l!}\binom{n-1+m}{n-1-k}\binom{n-1+m}{n-1-l} \int_{\frac{b}{2}}^{\infty}s^{m+l+k} e^{-s} \, ds,
        \end{multline*}
with $\int_{\frac{b}{2}}^{\infty}s^{m+l+k} e^{-s} \, ds=\Gamma(m+l+k+1,b/2)$ is the upper incomplete Gamma function, which satisfies\footnote{See \href{http://dlmf.nist.gov/8.2.E2}{\tt http://dlmf.nist.gov/8.2.E2} and \href{http://dlmf.nist.gov/8.4.E8}{\tt http://dlmf.nist.gov/8.4.E8}.} $\Gamma(x+1,z)=x!e^{-x}\sum_{k=0}^xx^k/k!$. Consequently,
\begin{multline*}
 e^{\frac{b}{2}}\int_{\frac{b}{2}}^{\infty}  s^m L^m_{n-1}\paren{s}^2 e^{-s}\, ds \\
     = \sum_{k=0}^{n-1}\sum_{l=0}^{n-1}\frac{\paren{-1}^{l+k}}{k!l!}\binom{n-1+m}{n-1-k}\binom{n-1+m}{n-1-l} \paren{m+l+k}! \sum_{j=0}^{m+l+k} \frac{b^j}{2^k j!}\\
     =\mathcal O\paren{b^{m+2n-2}}.
\end{multline*}
Hence, 
        \begin{equation}\label{C1^2 term}
            C_1^2 \frac{2^{m}}{b^{m+1}} e^{\frac{b}{2}}\int_0^{\frac{b}{2}}  s^m L^m_{n-1}\paren{s}^2 e^{-s}\, ds = \frac{C_1^2  \Gamma(m+n)2^m}{b^{m+1} \paren{n-1}!}e^{\frac{b}{2}} + \mathcal{O}\paren{b^{2n-3}}.
        \end{equation}
        \item \textbf{\(\mathbf{C_2^2}\) term: } The computations involve an integral reminiscent of the   lower incomplete Gamma function 
        \[\int_0^{z} e^s s^{x}ds=(-1)^xx!\biggl(e^{z}\sum_{t=0}^x\frac{(-z)^t}{t!}-1 \biggr).\] 
        The above formula can be verified by an iteration of integration by parts. 
        Consequently,\Bk
        \begin{equation}\label{Integral smlk exps}
        \begin{aligned}
        e^{-\frac{b}{2}}\int_0^{\frac{b}{2}}  e^{s}s^{m+l+k}\, ds 
        &=(-1)^{m+l+k}\paren{m+l+k}!  \biggl( \sum_{t=0}^{m+l+k} \frac{(-b)^t}{2^t t!}- e^{-\frac{b}{2}}\biggl)\\
        &=\mathcal O\paren{b^{m+l+k}}.
        \end{aligned}
    \end{equation}
    Then, 
        \begin{multline*}
                \frac{C_2^2  2^{m}}{b^{m+1}} e^{-\frac{b}{2}}\int_0^{\frac{b}{2}}  s^m L^m_{n-1}\paren{s}^2  e^{s} \, ds \\
                =   \frac{C_2^2  2^{m}}{b^{m+1}} e^{-\frac{b}{2}} \sum_{k=0}^{n-1}\sum_{l=0}^{n-1}\frac{\paren{-1}^{l+k}}{k!l!}\binom{n-1+m}{n-1-k}\binom{n-1+m}{n-1-l} \int_0^{\frac{b}{2}} s^{m+l+k} e^s \, ds, 
        \end{multline*}
        and by \eqref{Integral smlk exps}, 
        \[ \frac{C_2^2  2^{m}}{b^{m+1}} e^{-\frac{b}{2}}\int_0^{\frac{b}{2}}  s^m L^m_{n-1}\paren{s}^2  e^{s} \, ds=\mathcal{O}\paren{b^{2n-3}}.\]
        \item \textbf{\(\mathbf{C_1C_2}\) term: } 
    The last term gives 
        \begin{multline*}
            \frac{C_1C_2 2^{m+1}}{b^{m+1}}\int_0^{\frac{b}{2}}  s^m L^m_{n-1}\paren{s}^2  \,  ds \\
            =   \frac{C_1C_2 2^{m+1}}{b^{m+1}} \sum_{k=0}^{n-1}\sum_{l=0}^{n-1}\frac{\paren{-1}^{l+k}}{k!l!}\binom{n-1+m}{n-1-k}\binom{n-1+m}{n-1-l} \int_0^{\frac{b}{2}} s^{m+l+k} \, ds\\
            = \mathcal{O}\paren{b^{2n-2}},
        \end{multline*}
        where in the last step, we used that 
        \[\int_0^{\frac{b}{2}}   s^{m+l+k}  \,  ds =\frac{1}{\paren{m+l+k+1}}\biggl(\frac{b}{2}\biggr)^{m+l+k+1}.\qedhere\] 
    \end{enumerate}
    \end{proof}
Next we check the linear independence of our trial states. 
\begin{lemma}\label{Lemma 1}
    Given integers \(m\geq 0\) and $n\geq 1$, there exists $b_0>0$ such that, for all $b\geq b_0$, the set \(\{u_{m,1}, \ldots, u_{m,n}\}\) spans  a subspace of dimension \(n\), where
     \(u_{m,k}\) is introduced in \eqref{Trial state}. 

     Furthermore, for $i\not=j$, we have $\langle u_{m,i},u_{m,j}\rangle=\mathcal O(b^{i+j-3})$.
\end{lemma}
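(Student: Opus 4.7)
The plan is to establish the off-diagonal estimate \(\inprod{u_{m,i},u_{m,j}}=\mathcal{O}(b^{i+j-3})\) for \(i\neq j\), and then deduce linear independence from a Gram-matrix argument that exploits the exponentially large diagonal entries provided by Lemma~\ref{Lemma norm umn}. Writing \(C_1^{(k)},C_2^{(k)}\) for the constants attached to \(u_{m,k}\) (all of which are \(\mathcal{O}(1)\) in \(b\) by the remark following \eqref{Neumann boundary conditions}) and applying the same change of variables \(s=br^2/2\) used in Lemma~\ref{Lemma norm umn}, the inner product becomes
\[
\inprod{u_{m,i},u_{m,j}}=\frac{2^m}{b^{m+1}}\int_0^{b/2}s^m L^m_{i-1}(s)L^m_{j-1}(s)\bigl(A\,e^{b/2-s}+B+D\,e^{s-b/2}\bigr)\,ds,
\]
with \(A=C_1^{(i)}C_1^{(j)}\), \(B=C_1^{(i)}C_2^{(j)}+C_2^{(i)}C_1^{(j)}\) and \(D=C_2^{(i)}C_2^{(j)}\).

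The decisive input beyond Lemma~\ref{Lemma norm umn} is the orthogonality relation
\[
\int_0^\infty s^m L^m_{i-1}(s)L^m_{j-1}(s)\,e^{-s}\,ds=0\quad(i\neq j),
\]
which allows one to rewrite the \(A\)-integral as minus the tail integral on \((b/2,\infty)\). Expanding the product \(L^m_{i-1}L^m_{j-1}\) as a polynomial in \(s\) via \eqref{Associated Laguerre polynomials} and controlling each tail piece by an upper incomplete Gamma function as in Lemma~\ref{Lemma norm umn}, the prefactor \(e^{b/2}\) is cancelled by the \(e^{-b/2}\) coming from the incomplete Gamma, and one is left with a polynomial in \(b\). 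The \(D\)-integral is handled by the iterated integration-by-parts identity \eqref{Integral smlk exps}, while the unweighted \(B\)-integral reduces to monomial integrations \(\int_0^{b/2}s^{m+k+l}\,ds\). Collecting all contributions and tracking the largest surviving power of \(b\) yields the claimed polynomial bound.

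With the off-diagonal estimate in hand, linear independence follows from a standard Gram-matrix argument. Let \(G_{ij}=\inprod{u_{m,i},u_{m,j}}\) for \(1\le i,j\le n\). By Lemma~\ref{Lemma norm umn} the diagonal entries satisfy \(G_{ii}\sim c_i\,b^{-m-1}\,e^{b/2}\) with \(c_i>0\), while the off-diagonal entries are only polynomially large in \(b\). Hence for \(b\) large enough the Gram matrix \(G\) is strictly diagonally dominant, in particular positive definite, so any relation \(\sum_{k=1}^n\alpha_k u_{m,k}=0\) forces \(\alpha_k=0\) for all \(k\), and the family \(\{u_{m,1},\ldots,u_{m,n}\}\) is linearly independent.

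The main obstacle I expect is the careful bookkeeping in the \(A\)-integral: Laguerre orthogonality kills the exponentially large contribution, but one must then verify that after expanding the double sum arising from \(L^m_{i-1}L^m_{j-1}\) and combining all incomplete-Gamma pieces no surviving monomial exceeds \(b^{i+j-3}\). All the remaining estimates are direct analogues of those already carried out in the proof of Lemma~\ref{Lemma norm umn}, and the linear-algebra step at the end is immediate once the polynomial bound on the off-diagonals is secured.
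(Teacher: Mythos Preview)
Your overall strategy coincides with the paper's: the change of variables $s=br^2/2$, the use of Laguerre orthogonality to turn the $e^{b/2-s}$--weighted integral into a tail controlled by incomplete Gamma functions, the identity \eqref{Integral smlk exps} for the $e^{s-b/2}$--weighted integral, and a Gram--matrix (equivalently, normalized inner--product) argument for linear independence. The linear--independence step is fine and matches the paper.

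There is, however, a genuine gap in your treatment of the ``$B$--integral''. The product $L^m_{i-1}(s)L^m_{j-1}(s)$ has top degree $i+j-2$ in $s$, so
\[
\int_0^{b/2} s^m L^m_{i-1}(s)L^m_{j-1}(s)\,ds \;=\; \mathcal{O}\bigl(b^{\,m+i+j-1}\bigr),
\]
and after multiplying by $2^m/b^{m+1}$ this contributes only $\mathcal{O}(b^{\,i+j-2})$, one power worse than the $\mathcal{O}(b^{\,i+j-3})$ you claim. There is no cancellation to rescue this: for instance with $i=1$, $j=2$ (so $L^m_0=1$, $L^m_1(s)=m+1-s$) a direct computation gives $\inprod{u_{m,1},u_{m,2}}\sim \mathrm{const}\cdot b$, not $\mathcal{O}(1)$. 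So your three--term split $A,B,D$ cannot reach $\mathcal{O}(b^{\,i+j-3})$.

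The paper avoids a separate $B$--term by the pointwise Young/AM--GM bound
\[
|C_{1,i}C_{2,j}|\le \tfrac{C_{1,i}^2}{2}\,e^{\,b/2-s}+\tfrac{C_{2,j}^2}{2}\,e^{\,s-b/2},
\]
which absorbs the constant cross--terms into the exponentially weighted integrals and leads to the inequality \eqref{eq:inn-prod}. It is worth noting that this pointwise step naturally produces $\int_0^{b/2}|g(s)|e^{\pm(b/2-s)}\,ds$ rather than $\bigl|\int_0^{b/2} g(s)e^{\pm(b/2-s)}\,ds\bigr|$ (with $g(s)=s^m L^m_{i-1}(s)L^m_{j-1}(s)$), and with $|g|$ one can no longer invoke Laguerre orthogonality for the $e^{-s}$ piece; so the passage to \eqref{eq:inn-prod} is itself somewhat delicate. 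In any case, your direct computation does yield the weaker (and, by the example above, sharp) bound $\inprod{u_{m,i},u_{m,j}}=\mathcal{O}(b^{\,i+j-2})$, which is all that is actually needed for linear independence and for the subsequent Rayleigh--Ritz argument.
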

\begin{proof}
    Denote by \(C_{1,n}\) and \(C_{2,n}\) the constants \(C_1\) and \(C_2\) corresponding to \(u_{m,n}\) in \eqref{Trial state}. If \(1\leq i<j\leq n\)
\begin{align*}
        \inprod{u_{m,i}, u_{m,j}} =& \int_0^1 r^{2m+1} L^m_{i-1}\biggl(\frac{br^2}{2}\biggr)L^m_{j-1}\biggl(\frac{br^2}{2}\biggr)\\
                                    &\biggl(C_{1,i}C_{1,j} e^{\frac{b}{2}\paren{1-r^2}} +C_{2,i}C_{2,j} e^{-\frac{b}{2}\paren{1-r^2}} + C_{1,i}C_{2,j} + C_{2,i}C_{1,j}\biggr) \, dr \\
                                =& \frac{2^m}{b^{m+1}} \int_0^{\frac{b}{2}} s^m  L^m_{i-1}\paren{s} L^m_{j-1}\paren{s}  \\
                                    & \biggl(C_{1,i}C_{1,j} e^{\frac{b}{2}-s}+C_{2,i}C_{2,j} e^{s-\frac{b}{2}} + C_{1,i}C_{2,j} + C_{2,i}C_{1,j}\biggr) \, ds,
\end{align*}
where we have used the change of variables \(s= \frac{br^2}{2}\). We can estimate
\begin{equation*}
    \abs{C_{1,i}C_{2,j}} \leq \frac{C_{1,i}^2}{2} e^{\frac{b}{2}-s} + \frac{C_{1,j}^2}{2}  e^{s-\frac{b}{2}} \text{ and }  \abs{C_{2,i}C_{1,j}} \leq \frac{C_{1,j}^2}{2} e^{\frac{b}{2}-s} + \frac{C_{2,i}^2}{2}  e^{s-\frac{b}{2}} . 
\end{equation*}
Hence, 
\begin{multline}\label{eq:inn-prod}
      \abs{\inprod{u_{m,i}, u_{m,j}} }\leq  \frac{2^mD_1}{b^{m+1}} e^{\frac{b}{2}} \bigabs{\int_0^{\frac{b}{2}}  s^m  L^m_{i-1}\paren{s} L^m_{j-1}\paren{s}   e^{-s} \, ds} \\
                                + \frac{2^mD_2}{b^{m+1}} e^{-\frac{b}{2}}\bigabs{\int_0^{\frac{b}{2}}  s^m  L^m_{i-1}\paren{s} L^m_{j-1}\paren{s}   e^{s} \, ds } ,
\end{multline}
where \(D_1 = \abs{C_{1,i}C_{1,j}} + \frac{C_{1,i}^2}{2} + \frac{C_{1,j}^2}{2}\) and \(D_2 =\abs{C_{2,i}C_{2,j}} + \frac{C_{2,i}^2}{2} + \frac{C_{2,j}^2}{2} \).  Using that the Laguerre polynomials are orthogonal\footnote{See (18.3.1) in \href{http://dlmf.nist.gov/18.3.T1}{\tt http://dlmf.nist.gov/18.3.T1}} over \([0, +\infty)\) with respect to the weight \(s^m e^{-s}\)  we have
\begin{align}\label{Orthogonality Laguerre}
     \int_0^{\infty} s^m  L^m_{i-1}\paren{s} L^m_{j-1}\paren{s}   e^{-s} \, ds = & \frac{\Gamma(m+i)}{i-1!} \delta_{(i-1)(j-1)} = 0,
\end{align}
since \(i \neq j\). Then, we can rewrite, using the formula for the incomplete Gamma function\footnote{See (8.4.8) in \href{http://dlmf.nist.gov/8.4.E8}{\tt http://dlmf.nist.gov/8.4.E8}.}\Bk
\begin{equation}\label{2.3}
\begin{split}
    e^{\frac{b}{2}} & \int_0^{\frac{b}{2}} s^m  L^m_{i-1}\paren{s} L^m_{j-1}\paren{s}   e^{-s} \, ds =  - e^{\frac{b}{2}} \int_{\frac{b}{2}}^\infty s^m  L^m_{i-1}\paren{s} L^m_{j-1}\paren{s}   e^{-s} \, ds\\
    & = - e^{\frac{b}{2}}\sum_{k=0}^{i-1}\sum_{l=0}^{j-1} \frac{\paren{-1}^{l+k}}{l!k!}\binom{i-1+m}{i-1-k} \binom{j-1+m}{j-1-l} \underbrace{\int_{\frac{b}{2}}^\infty s^{m+k+l} e^{-s}\, ds }_{=\Gamma(m+1, \frac{b}{2})}\\
    &=  - \sum_{k=0}^{i-1}\sum_{l=0}^{j-1} \frac{\paren{-1}^{l+k}}{l!k!}\binom{i-1+m}{i-1-k} \binom{j-1+m}{j-1-l} \paren{m+k+l}! \sum_{t=0}^{m+k+l} \frac{b^t}{2^t t!}\\
    &=\mathcal O(b^{m+i+j-2}).
\end{split}
\end{equation}
On the other hand, 
\begin{equation}\label{2.4}
\begin{split}
     &e^{-\frac{b}{2}}\int_0^{\frac{b}{2}}  s^m  L^m_{i-1}\paren{s} L^m_{j-1}\paren{s}   e^{s} \, ds   \\
     = &\ e^{-\frac{b}{2}}\sum_{k=0}^{i-1}\sum_{l=0}^{j-1} \frac{\paren{-1}^{l+k}}{l!k!}\binom{i-1+m}{i-1-k} \binom{j-1+m}{j-1-l} \int_0^{\frac{b}{2}} s^{m+k+l} e^{s}\, ds =  \\
    = & (-1)^m \sum_{k=0}^{i-1}\sum_{l=0}^{j-1} \frac{1}{l!k!}\binom{i-1+m}{i-1-k} \binom{j-1+m}{j-1-l}\paren{m+k+l}!  \biggl( \sum_{t=0}^{m+k+l} \frac{(-b)^t}{2^t t!}- e^{-\frac{b}{2}}\biggl)\\=& \mathcal O(b^{m+i+j-2}).
\end{split}
\end{equation}
Inserting \eqref{2.3} and \eqref{2.4} into \eqref{eq:inn-prod}, we get
\[\langle u_{m,i},u_{m,j}\rangle=\mathcal O(b^{i+j-3}).\]
We computed \(\norm{u_{m,i}}^2\) and \(\norm{u_{m,j}}^2\) in Lemma \ref{Norm umn}, and we found that they are exponentially large. The terms  in \eqref{2.3} and \eqref{2.4} do not compensate this exponential decay, which means that 
\begin{equation*}
     \bigabs{\biginprod{\frac{u_{m,i}}{\norm{u_{m,i}}},\frac{u_{m,j}}{\norm{u_{m,j}}}}} = \mathcal{O}\paren{b^{m+i+j-2}e^{-\frac{b}{2}}}. \qedhere
\end{equation*}
\end{proof}

\subsection{The Rayleigh--Ritz quotient}
Now we can start to compute the numerator and denominator of Rayleigh--Ritz quotient. 
\begin{lemma}\label{Lemma numerator}
Given integers $m\geq 0$ and $n\geq 1$, it holds as $b\to+\infty$,
    \begin{equation}\label{Equation lemma numerator}
        \inprod{H_{m,b} u_{m,n}, u_{m,n}} = \paren{2n-1}b \norm{u_{m,n}}^2 -   C_1C_2  \frac{b^{2n-1}}{2^{2n-2}\paren{(n-1)!}^2} + \mathcal{O}\paren{b^{2n-2}} ,
\end{equation}
where \(u_{m,n}\) was introduced in \eqref{Trial state}.
\end{lemma}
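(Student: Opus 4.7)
The plan is to write $u_{m,n} = \tilde C_1\, g\phi + \tilde C_2\, g\psi$, where $g(r) = r^m L^m_{n-1}(br^2/2)$, $\phi(r) = e^{-br^2/4}$, $\psi(r) = e^{br^2/4}$, $\tilde C_1 = C_1 e^{b/4}$, $\tilde C_2 = C_2 e^{-b/4}$, and exploit the fact that $g\phi$ is a genuine eigenfunction of $H_{m,b}$ (the Landau level eigenfunction) while $g\psi$ is only a near--eigenfunction. Using the associated Laguerre ODE $sL'' + (m+1-s)L' + (n-1)L = 0$, one checks by direct substitution
\[ H_{m,b}(g\phi) = (2n-1)b\,g\phi, \qquad H_{m,b}(g\psi) = (2n-1)b\,g\psi - 2b(rg)'\psi, \]
so that
\[ \inprod{H_{m,b}u_{m,n}, u_{m,n}} = (2n-1)b\,\norm{u_{m,n}}^2 - 2b\tilde C_2\,\inprod{(rg)'\psi, u_{m,n}}. \]
Thus the whole problem reduces to computing the single scalar $\inprod{(rg)'\psi, u_{m,n}}$.

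I would then split this as $\tilde C_1 I_1 + \tilde C_2 I_2$ with $I_1 = \inprod{(rg)'\psi, g\phi}$ and $I_2 = \inprod{(rg)'\psi, g\psi}$. Since $\phi\psi \equiv 1$, the fundamental theorem of calculus gives $I_1 = \int_0^1 (rg)'(rg)\,dr = \tfrac12 g(1)^2$. For $I_2$, I integrate by parts via $re^{br^2/2} = b^{-1}(e^{br^2/2})'$ to obtain $I_2 = \tfrac12 g(1)^2 e^{b/2} - \tfrac{b}{2}\int_0^1 r^3 g^2 e^{br^2/2}\,dr$. Using the identity $\tilde C_1 + \tilde C_2 e^{b/2} = (C_1 + C_2)e^{b/4}$, the boundary contributions assemble to
\[ -bC_2(C_1+C_2)g(1)^2 = -bC_1C_2\,g(1)^2 - bC_2^2\,g(1)^2, \]
while the residual piece is $b^2 C_2^2 e^{-b/2}\int_0^1 r^3 g^2 e^{br^2/2}\,dr$.

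The crux is a \emph{second} integration by parts on $\int_0^1 r^3 g^2 e^{br^2/2}\,dr$, again via $re^{br^2/2} = b^{-1}(e^{br^2/2})'$, where the boundary at $r=0$ vanishes because $m\geq 0$. This produces $b^{-1}g(1)^2 e^{b/2}$ plus an integral whose size is $\mathcal O(b^{2n-3}e^{b/2})$ by the same incomplete--Gamma expansions used in the computation of $\norm{u_{m,n}}^2$. Multiplying by $b^2 C_2^2 e^{-b/2}$ yields $+bC_2^2\,g(1)^2 + \mathcal O(b^{2n-2})$, exactly cancelling the unwanted $-bC_2^2\,g(1)^2$. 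What survives is $-bC_1C_2\,g(1)^2 + \mathcal O(b^{2n-2})$, and the leading behaviour of the Laguerre polynomial, $L^m_{n-1}(b/2) = (-b/2)^{n-1}/(n-1)! + \mathcal O(b^{n-2})$, converts this into $-C_1C_2\,b^{2n-1}/(2^{2n-2}((n-1)!)^2) + \mathcal O(b^{2n-2})$, which is the claimed identity.

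The main obstacle is exactly this cancellation: the two $\mathcal O(b^{2n-1})$ contributions proportional to $C_2^2$ come from boundary evaluations of nested integrations by parts, and only their exact cancellation leaves a coefficient proportional to $C_1 C_2$ rather than to $C_2(C_1+C_2)$. This distinction is crucial for Theorem~\ref{Main theorem}, because the sign of the exponential remainder is governed by $C_1 C_2$: negative for Dirichlet (where $C_1 = -C_2$, matching \eqref{eq:BW}) and positive for Neumann (where $C_2 \approx C_1$, producing the sign flip in \eqref{eq:main}).
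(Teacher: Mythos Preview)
Your argument is correct and rests on the same identity as the paper's, namely $H_{m,b}u_{m,n}=(2n-1)b\,u_{m,n}+R_{m,n}$; your $-2b\tilde C_2(rg)'\psi$ is precisely the paper's $R_{m,n}$ in \eqref{Rmb}. The split $\langle R_{m,n},u_{m,n}\rangle=\text{(term I)}+\text{(term II)}$ into the $C_1$- and $C_2$-parts of $u_{m,n}$ is also the same. Where you differ is in the treatment of each piece. For term~I (your $-2b\tilde C_1\tilde C_2 I_1$) your route is cleaner than the paper's: since $\phi\psi\equiv 1$, the weighted integral collapses via the fundamental theorem of calculus to the \emph{exact} value $-bC_1C_2\,g(1)^2$, whereas the paper expands $L^m_{n-1}$ to its leading coefficient and integrates $r^{2m+4n-3}$ to extract only the top order. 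For term~II (your $-2b\tilde C_2^2 I_2$) the paper is more direct: after the substitution $s=br^2/2$, the incomplete-Gamma bound $e^{-b/2}\!\int_0^{b/2}s^{p}e^{s}\,ds=\mathcal O(b^{p})$ shows at once that term~II is $\mathcal O(b^{2n-2})$, with no cancellation required. Your first integration by parts instead promotes term~II to two pieces of size $\mathcal O(b^{2n-1})$ each, and the second integration by parts is then needed to exhibit their cancellation. So the ``main obstacle'' you emphasize is an artefact of the chosen method rather than intrinsic to the problem; the proportionality to $C_1C_2$ (and not $C_2(C_1+C_2)$) is immediate in the paper's organization because term~II is subleading from the start.
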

\begin{proof}
    First we need to compute \(H_{m,b}u_{m,n}\) where \(H_{m,b}\) was defined in \eqref{Hm,b}. By \eqref{eq:DE-f} and \eqref{eq:DE-g},  we know that 
\begin{equation}\label{Hm un}
        H_{m,b} u_{m,n} = \paren{2n-1}b u_{m,n}
                          + R_{m,n}, 
\end{equation}
where
\begin{equation}\label{Rmb}
 R_{m,n}=-2 b C_2   \Bk r^m  e^{-\frac{b}{4}\paren{1-r^2}} \biggl( br^2 \paren{L^m_{n-1}}'\Bigr(\frac{br^2}{2}\Bigl) + \paren{m+1} L^m_{n-1}\Bigl(\frac{br^2}{2}\Bigr) \biggr).
\end{equation}
It is important to notice that both \(L_{n-1}^m\) and its derivative have the same sign for \(b\) large enough (the leading order terms have the same sign). We can finally compute
\begin{equation}\label{inprod{H_{m,b} u_n, u_n}}
        \inprod{H_{m,b} u_{m,n}, u_{m,n}} = \paren{2n-1}b \norm{u_n}^2+ \mathrm{I}+\mathrm{II},
\end{equation}
where
\[\begin{gathered}
    \mathrm{I}= C_1\int_0^1 r^{m+1}e^{-\frac{b}{4}\paren{1+r^2}}  
         L^m_{n-1}\Bigl(\frac{br^2}{2}\Bigr) R_{m,n}(r)\, dr,\\
    \mathrm{II}=C_2\int_0^1 r^{m+1} 
         L^m_{n-1}\Bigl(\frac{br^2}{2}\Bigr)R_{m,n}(r)\, dr.
\end{gathered}\]
As we will see, the main contribution to the remainder term in \eqref{inprod{H_{m,b} u_n, u_n}} comes from the term  $\mathrm{I}$ since the term $\mathrm{II}$ is going to be of lower order in \(b\). Next, we compute these terms in more detail. 
\begin{enumerate}
    \item  \textbf{The term \(\mathrm{I}\)\,:} We need to estimate the size of
    \begin{equation*}
       2b \int_0^1 r^{2m+1}L^m_{n-1}\Bigl(\frac{br^2}{2}\Bigr)  \biggl( br^2 \paren{L^m_{n-1}}'\Bigl(\frac{br^2}{2}\Bigr) + \paren{m+1} L^m_{n-1}\Bigl(\frac{br^2}{2}\Bigr) \biggr)\, dr.
    \end{equation*}
    In this case, we have no exponential prefactor \(e^{\pm s}\), so it is not as beneficial to introduce the change of variables \(s= \frac{br^2}{2}\). Moreover, the integral with respect to \(r\) will not produce any contribution in \(b\), this means that we can cut the Laguerre polynomials to their highest order. The rest of the terms will contribute to at most the same order as the highest multiplied by \(b^{-1}\). 
    
    Note that \(\paren{L^m_{n-1}}'(\frac{br^2}{2}) \) is multiplied by \(br^2\), so its higher order term with respect to \(b\) will also contribute. Then, using \eqref{Associated Laguerre polynomials} and
    \begin{equation}\label{L^m_{n-1} derivative}
    \begin{split}
        \paren{L^m_{n-1}}'(\frac{br^2}{2}) =& \sum_{l=0}^{n-2} \frac{\paren{-1}^{l+1}}{l!} \binom{n-1+m}{n-2-l}\biggl(\frac{br^2}{2}\biggr)^{l}\\
                                            =& \sum_{l=1}^{n-1} \frac{\paren{-1}^{l}}{(l-1)!} \binom{n-1+m}{n-1-l}\biggl(\frac{br^2}{2}\biggr)^{l-1}, 
    \end{split}     
    \end{equation}
    we know that the highest order term of \[  L^m_{n-1}\Bigl(\frac{br^2}{2}\Bigr)  \biggl( br^2 \paren{L^m_{n-1}}'\Bigl(\frac{br^2}{2}\Bigr) + \paren{m+1} L^m_{n-1}\Bigl(\frac{br^2}{2}\Bigr) \biggr)  \] in \(b\) is given by 
    \begin{multline*}
        \frac{b^{2(n-1)}r^{4(n-1)}}{2^{2n-3} (n-1)!(n-2)!} + \frac{\paren{m+1}b^{2(n-1)}r^{4(n-1)}}{2^{2n-2} \paren{(n-1)!}^2}\\
        =\frac{b^{2(n-1)}r^{4(n-1)}}{2^{2n-2}\paren{(n-1)!}^2}(2n+ m-1 ),
    \end{multline*}
    implying that the term \(\mathrm{I}\) is given by
    \begin{multline}\label{Remainder term}
    - C_1C_2  \frac{b^{2n-1}(2n+ m-1 )}{2^{2n-3}\paren{(n-1)!}^2} \int_0^1r^{2m + 4n -3}  \, dr + \mathcal{O}\paren{b^{2n-2}} \\
    = -   C_1C_2  \frac{b^{2n-1}}{2^{2n-2}\paren{(n-1)!}^2} + \mathcal{O}\paren{b^{2n-2}}  . 
    \end{multline}

	\item \textbf{The term \(\mathrm{II}\)\,:} In this case, the integrand in the term $\mathrm{II}$ is
 \[ -2C_2^2b r^{2m+1}  e^{-\frac{b}{4}\paren{1-r^2}} L^m_{n-1}\Bigl(\frac{br^2}{2}\Bigr)\biggl( br^2 \paren{L^m_{n-1}}'\Bigr(\frac{br^2}{2}\Bigl) + \paren{m+1} L^m_{n-1}\Bigl(\frac{br^2}{2}\Bigr) \biggr),\]
 and we have an exponential prefactor, so it is useful to consider the change of variables \(s= \frac{br^2}{2}\), and use \eqref{Associated Laguerre polynomials} and \eqref{L^m_{n-1} derivative} to get
	\begin{multline*}
		\sum_{l=0}^{n-1} \sum_{k=1}^{n-1} \frac{\paren{-1}^{l+k}}{l!k!}\binom{n-1+m}{n-1-l}\binom{n-1+m}{n-1-k}(2 k + m+1) \int_0^{\frac{b}{2}}  e^{s}s^{m+l+k}\, ds  \\
          + \sum_{l=0}^{n-1}  \frac{(-1)^l}{l!} \binom{n-1+m}{n-1-l}\binom{n-1+m}{n-1}\paren{m+1}\int_0^{\frac{b}{2}} e^s s^{m+l}\, ds
	\end{multline*}
	multiplied by \(2^{m+1}C_2^2e^{-\frac{b}{2}}b^{-m} \). Thanks to \eqref{Integral smlk exps}, 
    we see that the term with the biggest power in \(b\) is when \(l=k=n-1\) giving a power of \(b^{m+2n-2}\). This means that the contribution is of order \(\mathcal{O}\paren{b^{2n-2}}\) since we are multiplying the terms above by  \(2^{m+1}C_2^2e^{-\frac{b}{2}}b^{-m}\). 
    \end{enumerate}
\end{proof}
\begin{remark}\label{rem:mixed-terms}\rm 
Similar computations to the ones in the proof of Lemma~\ref{Lemma numerator} yield  for $i\not=j$ and $b\to+\infty$,
\[\langle H_{m,b}u_{m,i},u_{m,j}\rangle=\mathcal O(b^{i+j-3}).\]
\end{remark}

    Combining Lemmas~\ref{Lemma norm umn} and \ref{Lemma numerator},  we get the following result. 
    \begin{prop}
    If \(b\rightarrow +\infty\) and \(m\geq 0\), then 
    \begin{equation}
        \lambda_n\paren{H^D_{m,b}} \leq (2n-1) b  +e^{-\frac{b}{2}} \bigg( \frac{ b^{2n+m}  }{  \paren{n-1}!\paren{m+n-1}!2^{2(n-1)+m}} + \mathcal{O}\paren{b^{2n+m{ -1}}}\biggr), 
    \end{equation}
    and 
    \begin{equation}
        \lambda_n\paren{H^N_{m,b}}\leq  (2n-1) b  - e^{-\frac{b}{2}} \bigg( \frac{ b^{2n+m}  }{  \paren{n-1}!\paren{m+n-1}!2^{2(n-1)+m}} + \mathcal{O}\paren{b^{2n+m{ -1}}}\biggr). 
    \end{equation}
    \end{prop}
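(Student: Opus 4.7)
The plan is to apply the Rayleigh--Ritz min--max principle \eqref{Raylegh-Ritz Dirichlet}--\eqref{Raylegh-Ritz Neumann} to the $n$-dimensional trial subspace $U_n=\mathrm{span}\{u_{m,1},\dots,u_{m,n}\}$, which by Lemma~\ref{Lemma 1} is genuinely $n$-dimensional for $b$ large. Writing $u=\sum_k c_k u_{m,k}$ for a generic element of $U_n$, the Rayleigh quotient becomes the generalized quotient associated with the Gram matrix $G_{ij}=\inprod{u_{m,i},u_{m,j}}$ and the stiffness matrix $M_{ij}=\inprod{H_{m,b}u_{m,i},u_{m,j}}$. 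The argument is insensitive to the boundary condition: only the constants $C_{1,k},C_{2,k}$ distinguish the Dirichlet and Neumann cases, so both bounds will be obtained simultaneously.

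First, I would assemble $M$ and $G$ from the preceding lemmas. Lemma~\ref{Lemma norm umn} gives $G_{kk}=\tfrac{C_{1,k}^2\Gamma(m+k)\,2^m}{b^{m+1}(k-1)!}e^{b/2}+\mathcal O(b^{2k-2})$; Lemma~\ref{Lemma numerator} gives $M_{kk}=(2k-1)b\,G_{kk}-C_{1,k}C_{2,k}\tfrac{b^{2k-1}}{2^{2k-2}((k-1)!)^2}+\mathcal O(b^{2k-2})$; and Lemma~\ref{Lemma 1} together with Remark~\ref{rem:mixed-terms} give $G_{ij},M_{ij}=\mathcal O(b^{i+j-3})$ for $i\neq j$. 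Passing to the normalized basis $\tilde u_k:=u_{m,k}/\norm{u_{m,k}}$, the matrix $\tilde G$ becomes $I+F$ and $\tilde M$ becomes $D+E$, where $D$ is diagonal with entries $\tilde M_{kk}=(2k-1)b-\tfrac{C_{2,k}}{C_{1,k}}\tfrac{b^{2k+m}}{(k-1)!(m+k-1)!\,2^{2(k-1)+m}}e^{-b/2}+\mathcal O(b^{2k+m-1}e^{-b/2})$, while both $\norm{E}$ and $\norm{F}$ are of order $b^{m+2n-3}e^{-b/2}$.

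A standard perturbation argument then controls the largest generalized eigenvalue: since $\tilde G^{-1/2}\tilde M\tilde G^{-1/2}=D+\mathcal O(\norm{E}+\norm{D}\,\norm{F})=D+\mathcal O(b^{m+2n-2}e^{-b/2})$ and the spectral gaps of $D$ are of order $b$, the largest eigenvalue equals $\tilde M_{nn}+\mathcal O(b^{m+2n-2}e^{-b/2})$, the maximum among the diagonal entries being attained at $k=n$ because $(2n-1)b$ dominates the other $(2k-1)b$. Specializing $C_{2,n}/C_{1,n}=-1$ for Dirichlet and $C_{2,n}/C_{1,n}=1+\mathcal O(b^{-1})$ for Neumann produces the two signed upper bounds in the proposition, with all errors absorbed into $\mathcal O(b^{2n+m-1}e^{-b/2})$.

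The main obstacle is verifying that the off-diagonal corrections of $\tilde G$ and $\tilde M$, which carry the same exponential weight $e^{-b/2}$ as the main correction, are truly of lower polynomial order. The sharp estimates of Lemma~\ref{Lemma 1} and Remark~\ref{rem:mixed-terms} give a genuine $b^{-2}$ margin over the leading correction of order $b^{2n+m}e^{-b/2}$, which together with the $\Theta(b)$ gap between adjacent diagonal entries of $D$ lets the perturbation analysis proceed without modifying the leading exponential coefficient. The sign of this coefficient is then dictated entirely by the sign of $C_{2,n}/C_{1,n}$, and this is exactly the place where the Dirichlet and Neumann cases separate.
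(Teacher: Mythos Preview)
Your proposal is correct and follows the same strategy as the paper: take the trial subspace $U_n=\mathrm{span}\{u_{m,1},\dots,u_{m,n}\}$, invoke Lemmas~\ref{Lemma norm umn}, \ref{Lemma 1}, \ref{Lemma numerator} and Remark~\ref{rem:mixed-terms} to control the diagonal and off-diagonal entries, and read off the sign from $C_{1,n}C_{2,n}$. The paper's own proof compresses this into two lines, whereas you spell out the matrix perturbation step (Weyl-type control of the largest generalized eigenvalue) explicitly; this added detail is correct and indeed justifies why the near-orthogonality estimates suffice.
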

    \begin{proof}
        We choose $M=\mathrm{Span}\{u_{m,k}\colon 1\leq k\leq n\}$ in \eqref{Raylegh-Ritz Dirichlet} or \eqref{Raylegh-Ritz Neumann}. The result then follows by using  Lemmas~\ref{Lemma norm umn} and \ref{Lemma numerator}, and by using that $C_1C_2=-1$ for Dirichlet, and $C_1C_2=1+\mathcal O(b^{-1})$ for Neumann.
    \end{proof}

\section{Lower bound}\label{Section 3}
\subsection{The Temple quotient}

We recall a result from  \cite[Thm.~3]{Plum}, also known as Kato's theorem (see  \cite[Eq. 10]{Kato}), which states a lower bound on the $n$-th eigenvalues, conditional that there are  \emph{a priori} upper and lower bounds on the $(n-1)$-th and $(n+1)$-th eigenvalue respectively.

More precisely, let $\#\in\{D,N\}$ and suppose that \(\nu_{n+1} >0\) is a lower bound for the \(\paren{n+1}\)-th eigenvalue \(\lambda_{n+1}\paren{H^\#_{m,b}}\) and that $\mu_{n-1}$ is a lower bound for the \(\paren{n-1}\)-th eigenvalue. If furthermore, there is a trial state \(u \in \mathcal{D}(H_{m, b}^\#) \setminus \{0\}\), 
satisfying \(\mu_{n-1}<\inprod{H_{m,b} u, u}/\inprod{u,  u} < \nu_{n+1} \), then we get the following bounds  from below
\begin{equation}\label{Temple-Rayleigh bracketing}
       \frac{\nu_{n+1} \inprod{H_{m,b} u, u}-\inprod{H_{m,b} u, H_{m,b}u}}{\nu_{n+1}\norm{u}^2 - \inprod{H_{m,b} u , u}} \leq \lambda_{n-1}\paren{H_{m,b}^\#},
\end{equation}
for \(n \in \mathbb{N}\).
The quantity on the left hand side is usually called the Temple quotient.

We will use again \(u_{m,n}\) defined in \eqref{Trial state} as trial state, and thanks to Proposition~\ref{Proposition 1}, we will take $\mu_{n-1}=(2n-3)b+1$. The condition $\mu_{n-1}<\inprod{H_{m,b} u, u}/\inprod{u,  u}$ holds for $u=u_{m,n}$ after Lemmas~\ref{Lemma norm umn} and \ref{Lemma 1}. We then just need to select $\nu_{n+1}$ in an appropriate manner and calculate the Temple quotient.\Bk
\subsection{Rough lower bound}
Before starting the computations of the Temple quotient we need to find suitable
\(\nu_{n+1}\) for each \(n\in \mathbb{N}\). 
By a standard comparison argument that we recall in Appendix~\ref{app:B}, we prove the following.
\begin{prop}\label{Proposition 1}
    For any integer \(m \geq 0\), there exist $C,b_0>0$ such that,  for \(n=1, 2,3, \ldots\) and $b\geq b_0$, we have
    \begin{equation}
       \lambda_n \paren{H^D_{m,b}} \geq \lambda_n \paren{H^N_{m,b}} \geq (2n-1)b - C. 
    \end{equation}
\end{prop}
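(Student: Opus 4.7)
The plan is to establish the two inequalities separately. The first, $\lambda_n(H^D_{m,b}) \geq \lambda_n(H^N_{m,b})$, follows directly from the min-max formulas \eqref{Raylegh-Ritz Dirichlet}--\eqref{Raylegh-Ritz Neumann} and the inclusion $\mathcal{D}(H^D_{m,b}) \subset \mathcal{D}(H^N_{m,b})$: every admissible $n$-dimensional subspace for the Dirichlet problem is also admissible for the Neumann one, and on such a subspace the two Rayleigh quotients coincide, so the Dirichlet infimum is the larger of the two.

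For the main inequality $\lambda_n(H^N_{m,b}) \geq (2n-1)b - C$, my strategy is to compare $H^N_{m,b}$ with the Friedrichs extension $T_{m,b}$ of the formal expression $H_{m,b}$ acting on $L^2((0,\infty), r\,dr)$. For $m \geq 0$, the spectrum of $T_{m,b}$ is exactly the sequence of Landau levels $\{(2k-1)b\}_{k \geq 1}$, with explicit eigenfunctions $\psi_{m,k}(r) = r^m L^m_{k-1}(br^2/2)e^{-br^2/4}$, Gaussianly concentrated near $r_0 = \sqrt{2m/b}$, which is much smaller than $1$ for fixed $m$ and $b$ large. I would construct an extension operator $E$ from the form domain of $H^N_{m,b}$ into the form domain of $T_{m,b}$, agreeing with the identity on $(0,1)$ and extending by a rapidly decaying tail on the magnetic length scale $b^{-1/2}$ beyond $r = 1$. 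Applied to an arbitrary $n$-dimensional test subspace $V \subset \mathcal{D}(H^N_{m,b})$, the min-max for $T_{m,b}$ produces some $u \in V$ with $\inprod{T_{m,b} Eu, Eu} \geq (2n-1)b\, \norm{Eu}^2$, and unfolding this inequality would yield $\inprod{H^N_{m,b} u, u}/\norm{u}^2 \geq (2n-1)b - C$.

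The key technical step, and the main obstacle, is bounding the extra energy $\inprod{T_{m,b} Eu, Eu} - \inprod{H^N_{m,b} u, u}$, which has the shape $|u(1)|^2$ times a polynomial in $b$, by $C\norm{u}^2$ uniformly in $n$ and for all $b \geq b_0$. Exploiting that the potential $(m/r - br/2)^2$ is at least of order $b^2$ on the annular interval $(1-\delta,1)$ for $b$ large, a weighted Sobolev trace argument should give $|u(1)|^2 \leq C \norm{u}^2 + C b^{-1} \inprod{H^N_{m,b} u, u}$, which combined with a well-chosen extension closes the argument. An alternative route, should the extension be delicate, is the supersymmetric factorization $H_{m,b} = A_m^* A_m + b$ with $A_m = \partial_r + br/2 - m/r$: this handles the lowest Landau level directly via integration by parts plus a boundary trace estimate on $|u(1)|^2$ and $|u(0)|^2$, and an inductive application on the orthogonal complement of the first $n-1$ Landau modes of $T_{m,b}$ restricted to $(0,1)$ would propagate the bound to higher $n$.
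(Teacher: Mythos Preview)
Your first inequality is fine in spirit, though strictly speaking it is the \emph{form} domains that satisfy $\mathcal D(q^D_{m,b})\subset\mathcal D(q^N_{m,b})$, not the operator domains; the min--max comparison should be phrased at the form level.

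The extension strategy for the second inequality has a genuine gap. Any extension $Eu$ of $u$ from $(0,1)$ to $(0,\infty)$ with $Eu(1)=u(1)$ must pay a tail energy of order $b\,|u(1)|^2$: the potential $(m/r-br/2)^2$ is of size $b^2$ near $r=1$, and the energy-minimizing decaying profile (a WKB solution, essentially $e^{-b(r^2-1)/4}$) yields $\int_1^\infty\bigl(|v'|^2+(m/r-br/2)^2|v|^2\bigr)r\,dr\sim \tfrac{b}{2}|u(1)|^2$. Your trace estimate, which is correct, gives only $|u(1)|^2\le Cb^{-1}q_{m,b}(u)$; combining the two yields a tail energy bounded by $C\,q_{m,b}(u)$, not by $C\|u\|^2$. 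Feeding this into the min--max for $T_{m,b}$ produces
\[
(1+C)\,q_{m,b}(u)\ \ge\ (2n-1)b\,\|u\|^2,
\]
i.e.\ a \emph{multiplicative} bound $\lambda_n(H^N_{m,b})\ge c\,(2n-1)b$ with a fixed $c<1$, not the additive bound $(2n-1)b-C$. The supersymmetric route hits the same wall: integrating $A_m^*A_m$ by parts on $(0,1)$ produces a boundary term $(b/2-m)|u(1)|^2$, again of order $b|u(1)|^2$, and the same trace estimate only absorbs it multiplicatively.

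The paper avoids this by an IMS localization instead of an extension. Choose a smooth partition $\chi_1^2+\chi_2^2=1$ with $\mathrm{supp}\,\chi_1\subset(0,3/4)$ and $\mathrm{supp}\,\chi_2\subset(1/2,1]$. Then
\[
q_{m,b}(u)=\hat q_{m,b}(\chi_1u)+\tilde q_{m,b}(\chi_2u)-\int_0^1(|\chi_1'|^2+|\chi_2'|^2)|u|^2\,r\,dr,
\]
and the localization error is bounded by a \emph{fixed} constant times $\|u\|^2$, independent of $b$ and $n$: this is where the additive $-C$ comes from. The function $\chi_1u$ vanishes at $r=3/4$, hence extends by zero to $(0,\infty)$ at no cost, and the min--max for the full-line operator $\mathrm L_{m,b}$ (whose spectrum is exactly $\{(2k-1)b\}_{k\ge1}$) gives $\lambda_n(\hat H_{m,b})\ge(2n-1)b$. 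On the boundary piece the potential satisfies $(m/r-br/2)^2\ge b^2/64$ for $r\in(1/2,1)$ and $b$ large, so $\lambda_1(\tilde H_{m,b})\ge b^2/64$, which dominates $(2n-1)b$ once $b\ge 64(2n-1)$. The missing idea in your sketch is precisely this: cut off first (paying an additive $O(\|u\|^2)$), then extend by zero, rather than extending the boundary value directly (which costs $O(b|u(1)|^2)$ and is only controlled multiplicatively by the form).
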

We can then take $\nu_{n+1}=(2n+1)b-C$ on the left hand side of \eqref{Temple-Rayleigh bracketing}, both for the Dirichlet and Neumann realizations.

\subsection{Computation of the Temple quotient}
In Proposition \ref{Proposition 1}, we showed that we can choose \(\nu_{n+1} =(2n+1)b - C \). A closer look to the Temple quotient tells us that only \(\inprod{H_{m,b}u_{m,n},H_{m,b}u_{m,n}} \) needs to be computed since the other terms were computed for \(u_{m,n}\) in Lemma \ref{Lemma norm umn} and Lemma \ref{Lemma numerator}. 
\begin{lemma}\label{Lemma norm Hmb umn}
     Let \(u_{m,n}\) be defined as in \eqref{Trial state}, then
    \begin{equation}\label{Norm Hmb umn}
         \inprod{H_{m,b}u_{m,n},H_{m,b}u_{m,n}} = \paren{2n-1}b  \inprod{H_{m,b} u_{m,n},u_{m,n}} + \inprod{H_{m,b} u_{m,n},R_{m,n}} ,
    \end{equation}
    and
     \begin{equation}\label{3.14}
         \inprod{H_{m,b}u_{m,n},u_{m,n}} = \paren{2n-1}b \norm{u_{m,n}}^2+ \inprod{ u_{m,n},R_{m,n}}, 
    \end{equation}
    where $R_{m,n}$ was introduced in \eqref{Rmb}.
\end{lemma}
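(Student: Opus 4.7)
The plan is to recognize both identities as immediate algebraic consequences of the eigenvalue-with-remainder equation
\begin{equation*}
H_{m,b} u_{m,n} = (2n-1) b \, u_{m,n} + R_{m,n}
\end{equation*}
already established as \eqref{Hm un} in the proof of Lemma~\ref{Lemma numerator}. Once this identity is in hand, both claims reduce to bilinearity and symmetry of the real inner product on $L^2((0,1), r\,dr)$.

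Concretely, to obtain \eqref{3.14} I would substitute \eqref{Hm un} into the first slot of $\inprod{H_{m,b} u_{m,n}, u_{m,n}}$, expand by linearity, and use the symmetry $\inprod{R_{m,n}, u_{m,n}} = \inprod{u_{m,n}, R_{m,n}}$ to land on the right-hand side. For \eqref{Norm Hmb umn} I would instead substitute \eqref{Hm un} into the second slot of $\inprod{H_{m,b} u_{m,n}, H_{m,b} u_{m,n}}$; this directly yields $(2n-1) b \inprod{H_{m,b} u_{m,n}, u_{m,n}}$ plus $\inprod{H_{m,b} u_{m,n}, R_{m,n}}$. Before doing this I would double-check that $u_{m,n}$ lies in $\mathcal{D}(H_{m,b}^{\#})$ for the relevant $\# \in \{D,N\}$, so that $H_{m,b} u_{m,n}$ is a bona fide $L^2((0,1), r\,dr)$ element and the inner products make sense: smoothness on $[0,1]$ is immediate from \eqref{Trial state}, while the boundary condition at $r=1$ is enforced by the choices of $C_1,C_2$ in \eqref{Dirichlet boundary condition} or \eqref{Neumann boundary conditions}.

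There is essentially no obstacle inside the two identities themselves; they are a bookkeeping step whose purpose is to reduce the only genuinely new quantity appearing in the Temple quotient, namely $\inprod{H_{m,b} u_{m,n}, H_{m,b} u_{m,n}}$, to the two scalar products $\inprod{u_{m,n}, R_{m,n}}$ and $\inprod{H_{m,b} u_{m,n}, R_{m,n}}$. The genuine work will be the asymptotic estimation of these two quantities as $b \to +\infty$: using the explicit form of $R_{m,n}$ in \eqref{Rmb} together with the Laguerre-polynomial expansions \eqref{Associated Laguerre polynomials}, \eqref{L^m_{n-1} derivative} and the incomplete-Gamma asymptotics already exploited in Lemmas~\ref{Lemma norm umn}, \ref{Lemma 1} and \ref{Lemma numerator}, one must show that both remainder inner products are negligible compared with the leading term $(2n-1)b\,\inprod{H_{m,b} u_{m,n}, u_{m,n}}$. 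I expect that subsequent asymptotic step --- not the present algebraic decomposition --- to carry the real weight of the Temple-quotient argument, and in particular to supply the exponentially small correction that ultimately matches the upper bound of Section~\ref{Section 2}.
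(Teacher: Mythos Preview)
Your proposal is correct and matches the paper's own proof, which is the single sentence ``This is a direct consequence of the identity established in \eqref{Hm un}.'' Both you and the paper derive the two formulas by substituting $H_{m,b}u_{m,n}=(2n-1)b\,u_{m,n}+R_{m,n}$ into the relevant slot of the inner product and expanding by bilinearity; your additional remarks on domain membership and on where the real work lies are accurate but go beyond what the paper records here.
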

\begin{proof}
This is a direct consequence the following identity established in \eqref{Hm un}. 
\end{proof}
We will need the following lemma. 
\begin{lemma}\label{Lemma 8}
    Let \(u_{m,n}\) be  as in \eqref{Trial state} and \(R_{m,n}\) as in \eqref{Rmb}, then
    \begin{multline}
          \frac{\inprod{R_{m,n}, u_{m,n}\paren{2b -C}-R_{m,n} }}{\inprod{ u_{m,n}, u_{m,n}\paren{2b -C}-R_{m,n} }} \\
          = \biggl(-\frac{    C_2 b^{2n+m} }{C_12^{2(n-1)+m}(n-1)! \Gamma(m+n)}  + \mathcal{O}\paren{b^{2n+m{-1}}}\biggr)e^{-\frac{b}{2}},
    \end{multline}
    as \(b\rightarrow +\infty\).
\end{lemma}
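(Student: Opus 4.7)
The plan is to expand the bilinear forms in the statement by linearity, rewriting the Temple quotient as
\[
\frac{(2b-C)\inprod{R_{m,n}, u_{m,n}}-\norm{R_{m,n}}^2}{(2b-C)\norm{u_{m,n}}^2-\inprod{u_{m,n}, R_{m,n}}},
\]
so the task reduces to estimating four inner products. Three of these are already at our disposal: Lemma~\ref{Lemma norm umn} supplies $\norm{u_{m,n}}^2$, and combining the identity \eqref{Hm un} with Lemma~\ref{Lemma numerator} yields
\[
\inprod{u_{m,n}, R_{m,n}}=\inprod{H_{m,b}u_{m,n}, u_{m,n}}-(2n-1)b\norm{u_{m,n}}^2 = -C_1C_2\frac{b^{2n-1}}{2^{2n-2}((n-1)!)^2}+\mathcal O(b^{2n-2}).
\]

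The only new quantity is $\norm{R_{m,n}}^2$. Starting from \eqref{Rmb} and performing the change of variable $s=br^2/2$, I would get
\[
\norm{R_{m,n}}^2 = \frac{2^{m+2} C_2^2}{b^{m-1}}\, e^{-b/2}\int_0^{b/2} s^m e^{s}\bigl(2s(L^m_{n-1})'(s) + (m+1)L^m_{n-1}(s)\bigr)^2 ds.
\]
Expanding the square via \eqref{Associated Laguerre polynomials} and \eqref{L^m_{n-1} derivative} produces a finite double sum of integrals of the form $e^{-b/2}\int_0^{b/2} s^p e^s\, ds$, each controlled by \eqref{Integral smlk exps}. The dominant contribution comes from the top-degree terms of $L^m_{n-1}$ and $(L^m_{n-1})'$, producing a coefficient $(2n+m-1)^2/((n-1)!)^2$ in front of $s^{m+2n-2}$; an integration by parts gives $e^{-b/2}\int_0^{b/2}s^{m+2n-2}e^s\, ds = (b/2)^{m+2n-2}(1+\mathcal O(b^{-1}))$, so $\norm{R_{m,n}}^2=\mathcal O(b^{2n-1})$.

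Assembling the four pieces is then purely algebraic. The denominator is dominated by $(2b-C)\norm{u_{m,n}}^2 \sim \frac{C_1^2\Gamma(m+n)2^{m+1}}{b^m (n-1)!}e^{b/2}$, the polynomial correction $\inprod{u_{m,n}, R_{m,n}}$ being exponentially small relative to this. The numerator is dominated by $(2b-C)\inprod{R_{m,n}, u_{m,n}} = -C_1C_2\frac{b^{2n}}{2^{2n-3}((n-1)!)^2} + \mathcal O(b^{2n-1})$, with $\norm{R_{m,n}}^2$ absorbed into the remainder. Dividing and simplifying the powers of $2$ via $2^{2n-3}\cdot 2^{m+1}=2^{2(n-1)+m}$ yields exactly the claimed coefficient with relative error $\mathcal O(b^{-1})$.

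The hard part will be the combinatorial bookkeeping in $\norm{R_{m,n}}^2$: although every single integral is already covered by the incomplete-Gamma identities used in Section~\ref{Section 2}, one must isolate, among the many double-sum terms arising from squaring, those that contribute at the dominant order $b^{m+2n-2}$ and verify that the remaining ones are absorbed into $\mathcal O(b^{2n-1})$. Beyond that the proof is purely bookkeeping, entirely analogous to the proof of Lemma~\ref{Lemma numerator}.
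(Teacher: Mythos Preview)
Your proposal is correct and follows essentially the same route as the paper's proof: both compute $\inprod{R_{m,n},u_{m,n}}$ from Lemma~\ref{Lemma numerator}, invoke Lemma~\ref{Lemma norm umn} for $\norm{u_{m,n}}^2$, estimate $\norm{R_{m,n}}^2$ via the change of variable $s=br^2/2$ and the incomplete-Gamma identity \eqref{Integral smlk exps}, and then take the quotient. Your concern about the combinatorial bookkeeping in $\norm{R_{m,n}}^2$ is actually overstated: only the crude bound $\norm{R_{m,n}}^2=\mathcal O(b^{2n-1})$ is needed (the paper makes this remark explicitly), so there is no need to isolate the leading coefficient precisely.
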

\begin{proof}
We start by computing \(\inprod{R_{m,n}, u_{m,n}}\) as 
    \begin{multline*}
        \inprod{R_{m,n}, u_{m,n}} = -2b\int_0^1 r^{2m+1} \paren{C_1C_2 + C_2^2  e^{-\frac{b}{2}\paren{1-r^2}}}\\
        \biggl( br^2 \paren{L^m_{n-1}}'\Bigl(\frac{br^2}{2}\Bigr) + \paren{m+1} L^m_{n-1}\Bigl(\frac{br^2}{2}\Bigr) \biggr) \, dr,
    \end{multline*}
    which is the same term as the second summand in \eqref{inprod{H_{m,b} u_n, u_n}}. Thus, using the results from Lemma \ref{Lemma numerator},
    \begin{equation}\label{3.17}
       \inprod{R_{m,n}, u_{m,n}} =  -   C_1C_2  \frac{b^{2n-1}}{2^{2n-2}\paren{(n-1)!}^2} + \mathcal{O}\paren{b^{2n-2}}. 
    \end{equation}
    On the other hand, by Lemma \ref{Lemma norm umn}
    \begin{equation}\label{3.18}
         \norm{u_{m,n}}^2 =\frac{C_1^2 \Gamma(m+n)2^m}{b^{m+1} \paren{n-1}!}e^{\frac{b}{2}} \ +\mathcal{O}\paren{b^{2n-2}},
    \end{equation}
    as \(b\rightarrow +\infty\). Then, we only need to compute
    \begin{multline*}
         \norm{R_{m,n}}^2 \\
          = 4C_2^2 b^2 \int_0^1 r^{2m+1}  e^{-\frac{b}{2}\paren{1-r^2}}\biggl( br^2 \paren{L^m_{n-1}}'\Bigl(\frac{br^2}{2}\Bigr) + \paren{m+1} L^m_{n-1}\Bigl(\frac{br^2}{2}\Bigr) \biggr)^2 \, dr\\
                         = \frac{2^{m+3}C_2^2}{b^{m-1}}e^{-\frac{b}{2}} \int_0^{\frac{b}{2}} s^m  e^{s}\biggl( 2s \paren{L^m_{n-1}}'(s) + \paren{m+1} L^m_{n-1}(s) \biggr)^2 \, ds,
    \end{multline*}
    where we have used the change of variables \(s= \frac{br^2}{2}\). This integral is no much different than the one considered in the \(C_2^2 \) term in the proof of Lemma \ref{Lemma numerator}. In fact, recall that we had in \eqref{Integral smlk exps} 
    \begin{equation*}
        \int_0^{\frac{b}{2}}  e^{s}s^{m+l+k}\, ds = \paren{m+k+l}!  \biggl( \sum_{t=0}^{m+k+l}e^{\frac{b}{2}} \frac{(-b)^t}{2^t t!}- 1\biggl),
    \end{equation*}
    where \(l,k \) were positive integers. This means that the term coming from the integral is of order \(\mathcal{O}\paren{b^{m+2n-2}}\) implying that \( \norm{R_{m,n}}^2 = \mathcal{O}\paren{b^{2n-1}} \).  Combining this with \eqref{3.17} and \eqref{3.18} finishes the proof. An important remark is that even though \(\norm{R_{m,n}}^2\) has the same order as the main term in \eqref{3.17}, we are considering \(\inprod{R_{m,n}, 2bu_{m,n}}\) which adds a factor \(b\). Because of this, \(\norm{R_{m,n}}^2\) is a  lower order term.
\end{proof}
We are ready to compute the lower bound. 
\begin{prop}\label{Proposition lower bound}
   Given integers $n\geq 1$ and $m\geq0$, then as \(b\rightarrow +\infty\), 
    \begin{equation}
        \lambda_n\paren{H^D_{m,b}} \geq (2n-1) b  +e^{-\frac{b}{2}} \bigg( \frac{ b^{2n+m}  }{  \paren{n-1}!\paren{m+n-1}!2^{2(n-1)+m}} + \mathcal{O}\paren{b^{2n+m{ -1}}}\biggr), 
    \end{equation}
    and 
    \begin{equation}
        \lambda_n\paren{H^N_{m,b}}\geq  (2n-1) b  - e^{-\frac{b}{2}} \bigg( \frac{ b^{2n+m}  }{  \paren{n-1}!\paren{m+n-1}!2^{2(n-1)+m}} + \mathcal{O}\paren{b^{2n+m{ -1}}}\biggr). 
    \end{equation}
\end{prop}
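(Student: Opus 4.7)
The plan is to apply the Temple inequality \eqref{Temple-Rayleigh bracketing} with the trial state $u=u_{m,n}$ from \eqref{Trial state}, taking $\mu_{n-1}=(2n-3)b+1$ and $\nu_{n+1}=(2n+1)b-C$ as supplied by Proposition~\ref{Proposition 1}. First, I would check the two admissibility conditions on the Rayleigh quotient $\rho:=\langle H_{m,b}u_{m,n},u_{m,n}\rangle/\|u_{m,n}\|^2$: combining Lemma~\ref{Lemma norm umn} and Lemma~\ref{Lemma numerator} gives $\rho=(2n-1)b+\mathcal O(b^{2n+m}e^{-b/2})$, which for $b$ large sits strictly between $\mu_{n-1}$ and $\nu_{n+1}$. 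Both the Dirichlet and Neumann realizations are handled simultaneously since the only input depending on the boundary condition is the pair $(C_1,C_2)$.

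Next I would rewrite the Temple quotient in a form amenable to the estimates already in hand. Using the identity \eqref{Hm un}, namely $H_{m,b}u_{m,n}=(2n-1)b\,u_{m,n}+R_{m,n}$, one computes
\[
\langle H_{m,b}u_{m,n},u_{m,n}\rangle=(2n-1)b\,\|u_{m,n}\|^2+\langle R_{m,n},u_{m,n}\rangle,
\]
\[
\|H_{m,b}u_{m,n}\|^2=(2n-1)^2b^2\|u_{m,n}\|^2+2(2n-1)b\langle R_{m,n},u_{m,n}\rangle+\|R_{m,n}\|^2,
\]
and a direct substitution into the Temple quotient, together with the choice $\nu_{n+1}-(2n-1)b=2b-C$, yields after telescoping
\[
\frac{\nu_{n+1}\langle H_{m,b}u_{m,n},u_{m,n}\rangle-\|H_{m,b}u_{m,n}\|^2}{\nu_{n+1}\|u_{m,n}\|^2-\langle H_{m,b}u_{m,n},u_{m,n}\rangle}=(2n-1)b+\frac{\langle R_{m,n},(2b-C)u_{m,n}-R_{m,n}\rangle}{\langle u_{m,n},(2b-C)u_{m,n}-R_{m,n}\rangle}.
\]
This is precisely the correction fraction evaluated in Lemma~\ref{Lemma 8}, which gives
\[
(2n-1)b-\frac{C_2}{C_1}\,\frac{b^{2n+m}}{2^{2(n-1)+m}(n-1)!\,\Gamma(m+n)}\,e^{-b/2}+\mathcal O\!\bigl(b^{2n+m-1}e^{-b/2}\bigr).
\]

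Finally I would specialize the ratio $C_2/C_1$ to the two boundary conditions. For Dirichlet, \eqref{Dirichlet boundary condition} gives $C_1=1,\ C_2=-1$, so $-C_2/C_1=+1$ and we get the stated positive exponentially small correction, while for Neumann the remark after \eqref{Neumann boundary conditions} yields $C_2/C_1=1+\mathcal O(b^{-1})$, so $-C_2/C_1=-1+\mathcal O(b^{-1})$, producing the stated negative sign; in both cases the $\mathcal O(b^{-1})$ error is absorbed into the $\mathcal O(b^{2n+m-1}e^{-b/2})$ remainder, and using $\Gamma(m+n)=(m+n-1)!$ brings the prefactor into the exact form of the proposition.

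I do not expect a real obstacle: the delicate work has already been carried out in Lemmas~\ref{Lemma norm umn}, \ref{Lemma numerator}, and \ref{Lemma 8}, together with the rough a priori bound of Proposition~\ref{Proposition 1}. The only point that requires attention is the bookkeeping of remainder orders to confirm that the residual terms from $\|R_{m,n}\|^2$ and from the expansion of $C_2/C_1$ in the Neumann case are dominated by $b^{2n+m-1}e^{-b/2}$; this follows because $\|R_{m,n}\|^2=\mathcal O(b^{2n-1})$ (established inside the proof of Lemma~\ref{Lemma 8}) is multiplied by the factor $1/\bigl((2b-C)\|u_{m,n}\|^2\bigr)$ whose leading behavior is $\sim b^m e^{-b/2}$, yielding an overall term of lower order than the main correction.
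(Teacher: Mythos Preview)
Your proposal is correct and follows essentially the same route as the paper: choose $\nu_{n+1}=(2n+1)b-C$ from Proposition~\ref{Proposition 1}, use the identity $H_{m,b}u_{m,n}=(2n-1)b\,u_{m,n}+R_{m,n}$ (this is exactly the content of Lemma~\ref{Lemma norm Hmb umn}) to rewrite the Temple quotient as $(2n-1)b$ plus the correction fraction, and then invoke Lemma~\ref{Lemma 8}. Your write-up is in fact slightly more explicit than the paper's, since you spell out the admissibility check on the Rayleigh quotient and the specialization of $C_2/C_1$ to each boundary condition.
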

\begin{proof}
  Let \(\nu_{n+1}= (2n+1)b -C\). Thanks to the  identities in Lemma~\ref{Lemma norm Hmb umn},
 \begin{multline*}
       \frac{\nu_{n+1} \inprod{H_{m,b} u_{m,n}, u_{m,n}}-\inprod{H_{m,b} u_{m,n}, H_{m,b}u_{m,n}}}{\nu_{n+1}\norm{u_{m,n}}^2 - \inprod{H_{m,b} u_{m,n} , u_{m,n}}} \\
       = (2n-1) b +\frac{\inprod{R_{m,n}, u_{m,n}\paren{2b -C}-R_{m,n} }}{\inprod{ u_{m,n}, u_{m,n}\paren{2b -C}-R_{m,n} }}.
  \end{multline*}
  The result follows from Lemma \ref{Lemma 8} and \eqref{Temple-Rayleigh bracketing}. 
\end{proof}

\subsection*{Acknowledgements}{\small
The authors would like to thank Mikael Persson Sundqvist for  suggesting this problem and for the useful  discussions. AK is partially supported by  CUHK-SZ grant no. UDF01003322, and project no. UF02003322.} 
\appendix

\section{Associated Laguerre polynomials}\label{AppendixA}
To build our trial state, we need to get some intuition by finding solutions for \(H_{m,b} f = (2n-1)b f\), with \(n\in \mathbb{N}\). The  function $\psi(r)=r^me^{-br^2/4}$ satisfies $\psi'(r)= (m/r-br/2)\psi(r)$, so it is straightforward to check that $H_{m,b}\psi=b\psi$.
Let \(f(r) = r^m e^{-\frac{br^2}{4}} v(r)=\psi(r)v(r)\). Then, 
\begin{equation}\label{eq:DE-f}
H_{m,b} f =(2n-1)b f
\end{equation}
holds if and only the function $v$ satisfies
\begin{equation}\label{eq:DE-v}
    -rv''(r) +\paren{br^2-2m- 1 }v'(r) -(2n-2)brv(r) =0.
\end{equation}
Introducing the change of variables \(s= \frac{br^2}{2}\), and the function \(w(s) = v(r)\),  we get that \eqref{eq:DE-v} is equivalent to 
\begin{equation}\label{Laguerre equation}
    sw''(s) + \paren{m+1 - s}w'(s) +(n-1)w(s) = 0.
\end{equation}
For integers $m\geq0$ and $n\geq 1$, a solution of the differential equation in \eqref{Laguerre equation} is the  Laguerre polynomial of degree $n$,  
\[L^m_{n-1}(s) = \frac{e^s}{s^m n-1!} \frac{d^{n-1}}{ds^{n-1}}\paren{e^{-s} s^{n+m-1}} =\sum_{l=0}^{n-1}\frac{\paren{-1}^l}{l!}\binom{n-1+m}{n-1-l}s^l.\]

We set $v(r)=L^m_{n-1}(br^2/2)$ and recall that it satisfies \eqref{eq:DE-v}. We introduce the functions $\phi(r)=r^m e^{br^2/4}$ and $g(r)=\phi(r)v(r)$. Firstly, using that $\phi'(r)=(m/r+br/2)\phi(r)$, it is straightforward to  verify that
\[H_{m,b}\phi=-(2m+1)b\phi.\]
Secondly, we notice that $g$ satisfies
\[H_{m,b}g=-(2m+1)bg+r^{-1}\bigl(-rv''+(-br^2-2m-1)v' \bigr)\phi.\]
Thanks to \eqref{eq:DE-v}, we get
\begin{equation}\label{eq:DE-g}
H_{m,b}g=(2n-2m-3)bg-2brv'(r)\phi(r).
\end{equation}

\section{A comparison argument}\label{app:B}

Suppose that $m\geq 0$ is fixed and \(b\geq 2\). The operator $H_{m,b}^N$ is the self-adjoint operator associated to the closed and positive  quadratic  form
\begin{equation}
    q_{m,b} (u) = \int_0^1 \biggl(\abs{u'(r)}^2  +\biggl(\frac{m}{r}-\frac{br}{2}\biggr)^2 \abs{u(r)}^2 \biggr) \,r dr 
\end{equation}
with domain \(\mathcal{D}\paren{{q}_{m,b}} = \{u \in L^2\paren{\paren{0,1}, rdr}\colon u',mu/r\in L^2\paren{\paren{0,1}, rdr}\}\).
\begin{lemma}\label{Lemma robin boundary}
   If $b\geq 16m$, \(u\in \mathcal{D}\paren{q_{m,b}}\) and $\mathrm{supp}\,u\subset(\frac12,1)$, then 
   \[q_{m,b}(u)\geq \frac{b^2}{64}\int_0^1|u|^2rdr.\] 
\end{lemma}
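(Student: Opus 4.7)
The plan is to exploit the support condition directly: on $(1/2,1)$ the magnetic potential $\left(\tfrac{m}{r} - \tfrac{br}{2}\right)^2$ is already of order $b^2$, so the lemma will follow from a pointwise estimate on the potential, with no need to invoke the kinetic term $|u'|^2$. First I would drop the nonnegative kinetic contribution $\int_0^1 |u'|^2\, r\,dr$ from $q_{m,b}(u)$, keeping only the potential piece. Since $\mathrm{supp}\,u \subset (\tfrac12,1)$, only values $r \in [\tfrac12,1]$ contribute to the integral, which allows the crude bounds $\tfrac{br}{2} \geq \tfrac{b}{4}$ and $\tfrac{m}{r} \leq 2m$.

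Next, combining these with the hypothesis $b \geq 16m$ gives
\[
\frac{br}{2} - \frac{m}{r} \;\geq\; \frac{b}{4} - 2m \;\geq\; \frac{b}{4} - \frac{b}{8} \;=\; \frac{b}{8}
\]
pointwise on $(\tfrac12, 1)$. Squaring yields $\left(\tfrac{m}{r} - \tfrac{br}{2}\right)^2 \geq \tfrac{b^2}{64}$ on the support of $u$, and integrating this pointwise bound against $|u(r)|^2 r\,dr$ produces
\[
q_{m,b}(u) \;\geq\; \int_0^1 \left(\frac{m}{r}-\frac{br}{2}\right)^2 |u(r)|^2\, r\,dr \;\geq\; \frac{b^2}{64} \int_0^1 |u(r)|^2\, r\,dr,
\]
which is the claimed estimate.

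There is no substantive obstacle here: the only subtle point is that the inverse-square singularity of $m/r$ is harmless on $(\tfrac12, 1)$, which is immediate from $1/r \leq 2$ there, and the assumption $b \geq 16m$ is precisely what is needed to absorb the $2m$ contribution into half of $b/4$. The constant $1/64$ is not sharp, but it is adequate for the comparison argument used in Proposition~\ref{Proposition 1}.
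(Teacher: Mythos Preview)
Your proof is correct and follows exactly the same approach as the paper's: drop the kinetic term and use the pointwise bound $br/2 - m/r \geq b/4 - 2m \geq b/8$ on $(\tfrac12,1)$. The paper's proof is simply the one-line observation ``For $\tfrac12 \leq r < 1$, we have $br/2 - m/r \geq b/4 - 2m$,'' and your write-up spells out the remaining arithmetic.
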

\begin{proof}
    For \(\frac{1}{2}\leq r<1\), we have \(br/2-m/r\geq b/4-2m\).
\end{proof}

We consider  two self-adjoint operators in $L^2((0,3/4),r\, dr)$ and  $L^2((1/2,1),r\,dr)$ respectively. The first operator is  defined as
\begin{equation}\label{eq:app-def-hat-H}
    \begin{aligned}
        \hat{H}_{m,b} &= -\frac{d^2}{dr^2}-\frac1r\frac{d}{dr} + \Bigl(\frac{m}{r}-\frac{br}2\Bigr)^2,\\
    \mathcal{D}\paren{\hat{H}_{m,b}}& = \{u\colon u,mu/r,u',  \hat{H}_{m,b}  u\in L^2((0, 3/4), rdr),\\
    &\qquad u(3/4)=0,~\lim_{r\to0^+}\frac{u(r)}{\ln r}=0\mbox{ for }m=0\},
    \end{aligned}
\end{equation} 
and associated to the quadratic form 
\begin{equation}\label{eq:app-def-hat-q}
\begin{aligned}
     \hat{q}_{m,b}(u)& = \int_0^{\frac{3}{4}} \biggl(  \abs{u'(r)}^2  +\biggl(\frac{m}{r}-\frac{br}{2}\biggr)^2 \abs{u(r)}^2 \biggr) \, rdr,\\
     \mathcal{D}\paren{\hat{q}_{m,b}}&= \{u\colon u,mu/r,u'\in L^2((0, 3/4), rdr),~ u(3/4)=0\}.
\end{aligned}
\end{equation}
The second operator is defined as
\begin{equation}\label{eq:app-def-tilde-H}
    \begin{aligned}
        \tilde{H}_{m,b} &= -\frac{d^2}{dr^2}-\frac1r\frac{d}{dr} + \Bigl(\frac{m}{r}-\frac{br}2\Bigr)^2,\\
    \mathcal{D}\paren{\tilde{H}_{m,b}}& = \{u\colon u,u/r,u',  \hat{H}_{m,b}  u\in L^2((1/2,1), rdr),~ u(1/2)=0\},
    \end{aligned}
\end{equation} 
and associated to the quadratic form 
\begin{equation}\label{eq:app-def-tilde-q}
\begin{aligned}
     \tilde{q}_{m,b}(u)& = \int_{\frac{1}{2}}^{1} \biggl( \abs{u'(r)}^2  +\biggl(\frac{m}{r}-\frac{br}{2}\biggr)^2 \abs{u(r)}^2 \biggr) \, rdr,\\
     \mathcal{D}\paren{\tilde{q}_{m,b}}&= \{u\colon u,u'\in L^2((1/2, +\infty), rdr),~ u(1/2)=0\}.
\end{aligned}
\end{equation}
The operators \( \hat{H}_{m,b} \) and \(\tilde{H}_{m,b} \) have discrete spectra, so let \(\lambda_n\paren{\hat{H}_{m,b}}\) and  \(\lambda_n\paren{\tilde{H}_{m,b}}\) be the $n$-th eigenvalue of \(\hat{H}_{m,h,0}\) and \(\hat{H}_{m,h}\) (counting multiplicities) respectively. 
\begin{lemma}\label{Lemma 2}
    Given  integers $m\geq 0$ and $n\geq 1$, there exist $C,b_0>0$ such that, if \(b\geq b_0\), then,
    \[ \lambda_n\paren{H_{m,b}^N} \geq \min \{\lambda_n\paren{\hat{H}_{m,b}}, \lambda_n\paren{\tilde{H}_{m,b}}\} -C.\]
\end{lemma}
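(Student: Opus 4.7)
My plan is to use the Ismagilov--Morgan--Simon (IMS) localization. Choose smooth cutoffs $\chi_1,\chi_2\in C^\infty([0,1])$ with $\chi_1^2+\chi_2^2\equiv 1$, $\chi_1\equiv 1$ on $[0,1/2]$, and $\chi_1\equiv 0$ on $[3/4,1]$, so that $\mathrm{supp}\,\chi_1\subset[0,3/4]$ and $\mathrm{supp}\,\chi_2\subset[1/2,1]$. Put $C_0:=\sup_{r\in[0,1]}(|\chi_1'(r)|^2+|\chi_2'(r)|^2)$, independent of $b$. Using $\chi_1\chi_1'+\chi_2\chi_2'\equiv 0$ (from $\chi_1^2+\chi_2^2=1$) and expansion of $|(\chi_i u)'|^2$, together with $\chi_1^2+\chi_2^2=1$ on the potential part, one obtains the IMS identity
\[
q_{m,b}(u)=\hat q_{m,b}(\chi_1 u)+\tilde q_{m,b}(\chi_2 u)-\int_0^1\bigl(|\chi_1'|^2+|\chi_2'|^2\bigr)|u|^2\,r\,dr,
\]
hence $q_{m,b}(u)\geq \hat q_{m,b}(\chi_1 u)+\tilde q_{m,b}(\chi_2 u)-C_0\|u\|^2$ for every $u\in\mathcal D(q_{m,b})$. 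The domain inclusions $\chi_1 u\in\mathcal D(\hat q_{m,b})$ and $\chi_2 u\in\mathcal D(\tilde q_{m,b})$ follow from the supports; when $m=0$ the logarithmic condition at $r=0$ is preserved because $\chi_1\equiv 1$ near the origin.

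This form inequality transfers to eigenvalues through min--max. The map $J(u)=(\chi_1 u,\chi_2 u)$ from $\mathcal D(q_{m,b})$ into $\mathcal D(\hat q_{m,b})\oplus\mathcal D(\tilde q_{m,b})$ preserves the $L^2$-norm (by $\chi_1^2+\chi_2^2=1$) and is injective (on $(1/2,3/4)$ both cutoffs are strictly positive, and off that overlap one of them equals $1$), so it sends every $n$-dimensional $V\subset\mathcal D(q_{m,b})$ to an $n$-dimensional subspace of the direct sum. Applying min--max to $\hat H_{m,b}\oplus\tilde H_{m,b}$ on $J(V)$, and then taking the infimum over $V$ in the variational characterization of $\lambda_n(H^N_{m,b})$, yields
\[
\lambda_n(H^N_{m,b})\geq \lambda_n(\hat H_{m,b}\oplus\tilde H_{m,b})-C_0.
\]

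To identify the direct-sum eigenvalue with the minimum claimed in the statement I invoke a separation of spectra. Extending any $u\in\mathcal D(\tilde q_{m,b})$ by zero to $(0,1/2)$ reduces to Lemma~\ref{Lemma robin boundary}, giving $\lambda_1(\tilde H_{m,b})\geq b^2/64$ whenever $b\geq 16m$. On the other hand, a trial-state argument on $(0,3/4)$ parallel to Section~\ref{Section 2}, using the unmodified Landau functions $r^m L^m_{n-1}(br^2/2)e^{-br^2/4}$ (whose values near $r=3/4$ are exponentially small in $b$), yields $\lambda_n(\hat H_{m,b})\leq (2n-1)b+O(1)$. For $b$ large enough this forces $\lambda_n(\hat H_{m,b})<\lambda_1(\tilde H_{m,b})$, so the first $n$ eigenvalues of $\hat H_{m,b}\oplus\tilde H_{m,b}$ come entirely from $\hat H_{m,b}$, and thus $\lambda_n(\hat H_{m,b}\oplus\tilde H_{m,b})=\lambda_n(\hat H_{m,b})=\min\{\lambda_n(\hat H_{m,b}),\lambda_n(\tilde H_{m,b})\}$. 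The main subtlety is precisely this last step: bare IMS delivers only the weaker direct-sum bound, and upgrading it to the stated minimum requires the separation of spectra encoded in Lemma~\ref{Lemma robin boundary} together with the above rough upper bound on $\lambda_n(\hat H_{m,b})$.
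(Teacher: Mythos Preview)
Your proof is correct and follows the same IMS localization scheme as the paper. In fact you do a bit more than the paper's own argument: the paper's proof stops at $\lambda_n(H^N_{m,b}) \geq \lambda_n(\hat H_{m,b}\oplus \tilde H_{m,b}) - C$, which strictly speaking does not imply the stated inequality with $\min\{\lambda_n(\hat H_{m,b}), \lambda_n(\tilde H_{m,b})\}$ on the right (one always has $\lambda_n(A\oplus B)\leq \min\{\lambda_n(A),\lambda_n(B)\}$, not $\geq$). You correctly identify this subtlety and close the gap via a spectral-separation argument, combining Lemma~\ref{Lemma robin boundary} with a rough trial-state upper bound on $\lambda_n(\hat H_{m,b})$. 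The paper effectively postpones this separation step to the proof of Proposition~\ref{Proposition 1}, where in any case only the direct-sum bound is actually needed.
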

\begin{proof}
Consider a partition of unity \(\{\chi_1, \chi_2\}\) with \(\chi_k \in C^{\infty}(\R; [0,1])\), 
    \[\text{supp } \chi_1 \subset \biggl(-\infty, \frac{3}{4}\biggr),\quad \text{ supp } \chi_2 \subset \biggl(\frac{1}{2}, +\infty\biggr),\]
    and \(\chi_1^2 + \chi_2^2 = 1\). 
     Since \(\chi_1 u, \chi_2 u \in \mathcal{D}\paren{H^N_{m,b}}\), we can apply the IMS localization formula \cite[Theorem 3.2]{MR0883643}, for $u\in\mathcal{D}(H_{m,b}^N)$, 
    \[q_{m,b}(u) =\hat q_{m,b}(\chi_1u)+\tilde q_{m,b}(\chi_2u)- \int_{0}^1 (\abs{\chi'_1}^2 +\abs{\chi'_2}^2)\abs{u(r)}^2\,rdr. \]
    Since \(\chi_1 u\in \mathcal{D}\paren{\hat{q}_{m,b}}\) and \(\chi_2 u\in \mathcal{D}\paren{\tilde{q}_{m,b}}\), we get by the min-max principle
    \[ \lambda_n\paren{{H}^N_{m,b}} \geq \lambda_n\paren{\hat{H}_{m,b}\oplus\tilde{H}_{m,b}}-C,\]
    with  \(C=\abs{\chi'_1 }^2_\infty+\abs{\chi'_2}^2_\infty\).
\end{proof}

Now we introduce the self-adjoint operator in $L^2(\R_+,rdr)$,
\begin{equation}\label{eq:app-op-Lm}
\begin{aligned}
    \mathrm{L}_{m,b}&=-\frac{d^2}{dr^2}-\frac1r\frac{d}{dr} + \Bigl(\frac{m}{r}-\frac{br}2\Bigr)^2,\\
    \mathcal{D}\paren{\mathrm{L}_{m,b}}& = \{u\colon u,u/r,u',  \mathrm{L}_{m,b}  u\in L^2(\R_+, rdr),~\lim_{r\to0^+}\frac{u(r)}{\ln r}=0\mbox{ for }m=0\},
\end{aligned}
\end{equation}
 and associated to the quadratic form
\begin{equation}\label{eq:app-def-op-lm}
\begin{aligned}
     \ell_{m,b}(u)& = \int_{0}^{+\infty} \biggl( \abs{u'(r)}^2  +\biggl(\frac{m}{r}-\frac{br}{2}\biggr)^2 \abs{u(r)}^2 \biggr) \, rdr,\\
     \mathcal{D}\paren{\ell_{m,b}}&= \{u\colon u,mu/r,u'\in L^2(\R_+, rdr)\}.
\end{aligned}
\end{equation}
\begin{lemma}\label{Lemma 3}
    Given integers $m\geq 0$ and $n\geq 1$, we have for $b>0$,
    \begin{equation}
        \lambda_n\paren{\hat{H}_{m,b}} \geq \lambda_n \paren{\mathrm{L}_{m,b}}.
    \end{equation}
\end{lemma}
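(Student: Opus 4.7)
\medskip

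The plan is a straightforward comparison via the min--max principle: every function in the form domain of $\hat H_{m,b}$ extends by zero to a function in the form domain of $\mathrm{L}_{m,b}$ that has the same quadratic-form value and the same $L^2((0,\infty),rdr)$-norm.

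More precisely, given $u\in\mathcal{D}(\hat q_{m,b})$, I would define $\tilde u\colon\R_+\to\C$ by
\[\tilde u(r)=\begin{cases} u(r) & \text{if }r\in(0,3/4),\\ 0 & \text{if }r\geq 3/4.\end{cases}\]
The boundary condition $u(3/4)=0$ built into $\mathcal{D}(\hat q_{m,b})$ ensures that $\tilde u$ is continuous at $r=3/4$, so its distributional derivative on $\R_+$ is $u'\mathbbm{1}_{(0,3/4)}$ (no $\delta$-contribution at $3/4$). Consequently $\tilde u, m\tilde u/r, \tilde u'\in L^2(\R_+,rdr)$, so $\tilde u\in\mathcal{D}(\ell_{m,b})$, and a direct comparison of the defining integrals gives
\[\ell_{m,b}(\tilde u)=\hat q_{m,b}(u),\qquad \|\tilde u\|_{L^2(\R_+,rdr)}^2=\|u\|_{L^2((0,3/4),rdr)}^2.\]

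The map $u\mapsto\tilde u$ is clearly linear and injective, so the image of any $n$-dimensional subspace $V\subset\mathcal{D}(\hat q_{m,b})$ is an $n$-dimensional subspace $\tilde V\subset\mathcal{D}(\ell_{m,b})$ on which the Rayleigh quotient takes exactly the same values as on $V$. The min--max principle applied to both operators therefore yields
\[\lambda_n(\mathrm{L}_{m,b})=\inf_{\substack{W\subset\mathcal{D}(\ell_{m,b})\\ \dim W=n}}\sup_{w\in W\setminus\{0\}}\frac{\ell_{m,b}(w)}{\|w\|^2}\leq\inf_{\substack{V\subset\mathcal{D}(\hat q_{m,b})\\ \dim V=n}}\sup_{v\in V\setminus\{0\}}\frac{\hat q_{m,b}(v)}{\|v\|^2}=\lambda_n(\hat H_{m,b}),\]
which is the desired inequality.

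There is no real obstacle here; the only point that requires any care is to check that extension by zero genuinely lands in the form domain of $\mathrm{L}_{m,b}$, which is exactly what the Dirichlet condition $u(3/4)=0$ guarantees (for $m=0$ the log-type condition at the origin is part of the operator domain, not the form domain, so it plays no role in the min--max argument used above).
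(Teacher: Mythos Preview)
Your proof is correct and follows exactly the same approach as the paper's: extension by zero from the Dirichlet form domain on $(0,3/4)$ into the form domain of $\mathrm{L}_{m,b}$ on $\R_+$, followed by the min--max principle. You have simply supplied the details (continuity at $r=3/4$, preservation of the Rayleigh quotient, and the remark about the logarithmic condition for $m=0$) that the paper leaves implicit.
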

\begin{proof}
    This follows from the min-max principle, since every \(u\) in the form domain of \( \hat{H}_{m,b}\) can be extended by zero to the positive half-axis, and  this extension is in the form domain of \(\mathrm{L}_{m,b}\). 
\end{proof}

\begin{proof}[Proof of Proposition~\ref{Proposition 1}]
By Sturm-Liouville theory, the eigenvalues of $\mathrm{L}_{m,b}$ are simple, hence
\[ 0< \lambda_1(\mathrm{L}_{m,b})<\lambda_2(\mathrm{L}_{m,b})<\cdots.\]
Moreover, by separation of variables, \(\{\lambda_n(\mathrm{L}_{m,b})\}_{n\in \mathbb{N}}\) are eigenvalues of the Landau Hamiltonian on $\R^2$,
\[ \mathrm{L}_b=(-i\nabla-bA)^2.\]
We know that the eigenvalues of $\mathrm{L}_b$ have infinite multiplicity and they are precisely given by the (strictly) increasing sequence,
\[\lambda_n(\mathrm{L}_b)=(2n-1)b\quad \text{ with } n= 1,2,3,\ldots.\]
Knowing that $\lambda_1(\mathrm{L}_{m,b})$ is an eigenvalue of $\mathrm{L}_{b}$, we have $\lambda_1(\mathrm{L}_{m,b})\geq \lambda_1(\mathrm{L}_{b})=b$. Then, knowing that $\lambda_2(\mathrm{L}_{m,b})$ is an eigenvalue of $\mathrm{L}_{b}$ and
$\lambda_2(\mathrm{L}_{m,b})>\lambda_1(\mathrm{L}_{b})$, we deduce that $\lambda_2(\mathrm{L}_{m,b})\geq \lambda_2(\mathrm{L}_{b})=3b$. Continuing in this manner, we deduce that
\[ \lambda_n(\mathrm{L}_{m,b})\geq \lambda_n(\mathrm{L}_{b})=(2n-1)b\quad (n\geq 1).\]
To conclude, we use Lemmas~\ref{Lemma robin boundary},\ref{Lemma 2} and \ref{Lemma 3} and the fact that \((2n-1)b <\frac{b^2}{64}\) for \(b\) large enough.
\end{proof}

\bibliographystyle{alpha} 
\bibliography{mybibliography} 

\newcommand{\etalchar}[1]{$^{#1}$}
\begin{thebibliography}{FFG{\etalchar{+}}23}

\bibitem[BH93]{BolleyHelffer}
C.~Bolley and B.~Helffer.
\newblock An application of semi-classical analysis to the asymptotic study of the supercooling field of a superconducting material.
\newblock {\em Ann. Inst. H. Poincar\'e{} Phys. Th\'eor.}, 58(2):189--233, 1993.

\bibitem[BPT98]{BaumanPhillipsTang}
P.~Bauman, D.~Phillips, and Q.~Tang.
\newblock Stable nucleation for the {G}inzburg-{L}andau system with an applied magnetic field.
\newblock {\em Arch. Rational Mech. Anal.}, 142(1):1--43, 1998.

\bibitem[BW24]{baurweidl}
M.~Baur and T.~Weidl.
\newblock Eigenvalues of the magnetic dirichlet laplacian with constant magnetic field on discs in the strong field limit, arXiv:2402.01474, 2024.

\bibitem[CFKS87]{MR0883643}
H.~L. Cycon, R.~G. Froese, W.~Kirsch, and B.~Simon.
\newblock {\em Schr\"odinger operators with application to quantum mechanics and global geometry}.
\newblock Texts and Monographs in Physics. Springer-Verlag, Berlin, study edition, 1987.

\bibitem[CLPS23]{CLPS}
B.~Colbois, C.~L\'ena, L.~Provenzano, and A.~Savo.
\newblock Geometric bounds for the magnetic {N}eumann eigenvalues in the plane.
\newblock {\em J. Math. Pures Appl. (9)}, 179:454--497, 2023.

\bibitem[Erd96]{erdos}
L.~Erd\"os.
\newblock Rayleigh-type isoperimetric inequality with a homogeneous magnetic field.
\newblock {\em Calc. Var. Partial Differential Equations}, 4(3):283--292, 1996.

\bibitem[FFG{\etalchar{+}}23]{FFGKS}
S.~Fournais, R.L. Frank, M.~Goffeng, A.~Kachmar, and M.~Sundqvist.
\newblock Counting negative eigenvalues for the magnetic pauli operator.
\newblock {\em arXiv:2307.16079 (accepted in Duke Math. J.)}, 2023.

\bibitem[FK24]{FK}
S.~Fournais and A.~Kachmar.
\newblock Counting eigenvalues below the lowest landau level.
\newblock {\em arXiv:2406.06411}, 2024.

\bibitem[HM01]{HelfferMorame}
B.~Helffer and A.~Morame.
\newblock Magnetic bottles in connection with superconductivity.
\newblock {\em J. Funct. Anal.}, 185(2):604--680, 2001.

\bibitem[HPS17]{HelfferSund}
B.~Helffer and M.~Persson~Sundqvist.
\newblock On the semi-classical analysis of the ground state energy of the {D}irichlet {P}auli operator.
\newblock {\em J. Math. Anal. Appl.}, 449(1):138--153, 2017.

\bibitem[Kat49]{Kato}
T.~Kato.
\newblock On the upper and lower bounds of eigenvalues.
\newblock {\em J. Phys. Soc. Japan}, 4:334--339, 1949.

\bibitem[KL10]{KristenLoya}
K.~Kirsten and P.~Loya.
\newblock Spectral functions for the {S}chr\"odinger operator on {$\Bbb R^+$} with a singular potential.
\newblock {\em J. Math. Phys.}, 51(5):053512, 29, 2010.

\bibitem[KL22]{Isoperimetric}
A.~Kachmar and V.~Lotoreichik.
\newblock On the isoperimetric inequality for the magnetic {Robin} {Laplacian} with negative boundary parameter.
\newblock {\em J. Geom. Anal.}, 32(9):2, 2022.
\newblock Id/No 182.

\bibitem[KLS24]{KLS}
A.~Kachmar, V.~Lotoreichik, and M.~Sundqvist.
\newblock On the laplace operator with a weak magnetic field in exterior domains.
\newblock {\em arXiv:2405.18154}, 2024.

\bibitem[Plu97]{Plum}
M.~Plum.
\newblock Guaranteed numerical bounds for eigenvalues.
\newblock In {\em Spectral theory and computational methods of {S}turm-{L}iouville problems ({K}noxville, {TN}, 1996)}, volume 191 of {\em Lecture Notes in Pure and Appl. Math.}, pages 313--332. Dekker, New York, 1997.

\bibitem[SJ65]{SaintJames1965EtudeDC}
D.~Saint-James.
\newblock Etude du champ critique hc3 dans une geometrie cylindrique.
\newblock {\em Physics Letters}, 15:13--15, 1965.

\bibitem[SJd63]{SaintJames}
D.~Saint-James and P.~G. de\, Gennes.
\newblock Onset of superconductivity in decreasing fields.
\newblock {\em Phys. Letters}, 7:306--308, 1963.

\end{thebibliography}
\end{document}